\documentclass[12pt, reqno]{amsart}
\usepackage{amstext,amsthm,amsmath,amsfonts,amssymb,amscd,latexsym,stmaryrd}
\usepackage[letterpaper, margin=1.25in]{geometry}
\numberwithin{equation}{section}
\usepackage{color}
\usepackage{epsfig}
\usepackage{cite}
\usepackage[colorlinks=true, pdfstartview=FitV, linkcolor=blue, citecolor=blue, urlcolor=blue]{hyperref}
\usepackage[nameinlink]{cleveref}
\usepackage[all,cmtip]{xy}
\usepackage{geometry}
\usepackage{youngtab}

\newcommand{\C}{{\mathbb C}}
\newcommand{\R}{{\mathbb R}}
\newcommand{\Z}{{\mathbb Z}}

\newcommand{\Q}{{\mathbb Q}}

\newcommand{\F}{{\mathbb F}}

\newcommand{\Ann}{\operatorname{Ann}}
\newcommand{\flo}[1]{\left\lfloor\frac{#1}{2}\right\rfloor}

\newcommand{\Hess}{\operatorname{Hess}}
\newcommand{\sgn}{\operatorname{sgn}}

\renewcommand{\P}{{\mathbb P}}

\newtheorem{theorem}{Theorem}[section]
\newtheorem{proposition}[theorem]{Proposition}
\newtheorem{lemma}[theorem]{Lemma}

\newtheorem{introthm}{Theorem}

\newtheorem{fact}[theorem]{Fact}
\newtheorem{conjecture}[theorem]{Conjecture}
\newtheorem*{question*}{Question}
\newtheorem*{answer*}{Answer}
\newtheorem*{remarks*}{Remarks}
\newtheorem*{claim*}{Claim}
\newtheorem*{remark*}{Remark}
\newtheorem*{proposition*}{Proposition}
\newtheorem*{lemma*}{Lemma}
\newtheorem*{fact*}{Fact}

\theoremstyle{definition}

\newtheorem{remark}[theorem]{Remark}
\newtheorem{example}[theorem]{Example}

\setlength{\parindent}{12 pt}
\thispagestyle{empty}

\begin{document}
	\title[Almkvist's Conjecture]{Lattice Paths, Lefschetz Properties, and Almkvist's Conjecture in Two Variables}
	\author[Abdallah-McDaniel]{Nancy Abdallah and Chris McDaniel}

\address{Department of Mathematics\\
	University of Borås\\
	Borås, Sweden}
\email{nancy.abdallah@hb.se}
	
\address{Department of Mathematics\\
		Endicott College\\
		376 Hale St Beverly, MA 01915, USA.}
	\email{cmcdanie@endicott.edu}

\thanks{{\bf MSC 2020 classification}: Primary: 13E10;  Secondary: 05A10, 05B20, 11B83, 13H10, 14F45, 15A15, 20F55.}
\thanks{{\bf Keywords:} Hodge-Rieman property, strong Lefschetz property, higher Hessian, NE lattice paths, binomial determinants, pseudo-reflection group.}

	\maketitle
	\begin{abstract}
	We study a certain two-parameter family of non-standard graded complete intersection algebras $A(m,n)$.  In case $n=2$, we show that if $m$ is even then $A(m,2)$ has the strong Lefschetz property and satisfies the complex Hodge-Riemann relations, while if $m$ is odd then $A(m,2)$ satisfies these properties only up to a certain degree.  This supports a strengthening of a conjecture of Almkvist on the unimodality of the Hilbert function of $A(m,n)$. 
	\end{abstract}

	\section{Introduction}
	\label{sec:Intro}
	Let $\Q$ be the field of rational numbers, and fix positive integers $m,n$.  Let $R=R^n=\Q[x_1,\ldots,x_n]$ denote the polynomial ring in $n$ variables with the standard grading.  For each $0\leq i\leq n$, let $e_i=e_i(x_1,\ldots,x_n)=\sum_{1\leq j_1<\cdots<j_i\leq n}x_{j_1}\cdots x_{j_n}\in R$ be the $i^{th}$ elmentary symmetric function, and define the $m^{th}$ power elementary symmetric functions $e_i(m)=e_i(x_1^m,\ldots,x_n^m)$.  Let $\Q[e_1,\ldots,e_n]\subset R$ be the subalgebra generated by the elementary symmetric functions.  It follows from fundamental theorem of symmetric polynomials \cite[Theorems 1.1.1, 1.1.2]{Smith} that $\Q[e_1,\ldots,e_n]$ is a polynomial algebra to which every other symmetric polynomial in $R$ belongs.  In particular, $e_i(m)\in \Q[e_1,\ldots,e_n]$ for each $0\leq i\leq n$, and let $(e_1(m),\ldots,e_n(m))\subset \Q[e_1,\ldots,e_n]$ be the ideal they generate.  The main object of study in this paper is the quotient algebra 
	\begin{equation}
		\label{eq:Amn}
		A(m,n)=\frac{\Q[e_1,\ldots,e_n]}{(e_1(m),\ldots,e_n(m))}.
	\end{equation}
	Using invariant theory, one can show that $A(m,n)$ is a graded Artinian complete intersection for all positive integers $m,n$; see \Cref{sec:Appendix} for further details.  One can show, e.g. \cite[Theorem 2.21 and Proposition 3.1]{MCIM}, that the Hilbert polynomial $H(m,n;t)$ for $A(m,n)$ is given by the formula 
	\begin{equation}
		\label{eq:Hilbert}
		H\left(m,n;t\right)=\frac{(1-t^m)\cdots(1-t^{mn})}{(1-t)\cdots(1-t^n)}=\prod_{i=1}^n\left(1+t^i+t^{2i}+\cdots+t^{(m-1)i}\right).
	\end{equation} 

	Expanding the Hilbert polynomial in \Cref{eq:Hilbert}
	$$H(m,n;t)=\sum_{k\geq 0}^dH(m,n)_k\cdot t^k, \ \ \text{where} \  H(m,n)_k=\dim_\Q(A_k),$$ 
	we refer to its sequence of coefficients $H(m,n)=(H(m,n)_k)_{k=0}^d$ as the Hilbert function of $A(m,n)$.
	
	This family of Hilbert functions $H(m,n)$ also has a combinatorial interpretation as the generating functions for a certain two parameter family of integer partitions; see \Cref{sec:Appendix} for further details.  
	In a series of papers from the 1980s, G. Almkvist \cite{Alm1,Alm2,Alm3} studied these generating functions $H(m,n)$ and formulated the following remarkable conjecture:
	\begin{conjecture}[Almkvist 1985]
		\label{conj:Alm}
		\begin{enumerate}
			\item For each $m$, the Hilbert function $H\left(m,n\right)$ is unimodal for all $n\geq 11$.  
			\item Moreover if $m$ is even, then $H\left(m,n\right)$ is unimodal for all $n$.
		\end{enumerate}
	\end{conjecture}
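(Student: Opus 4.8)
The plan is to derive the unimodality of $H(m,n)$ from a \emph{Lefschetz property} of the algebra $A(m,n)$. Since $A(m,n)$ is a graded Artinian complete intersection, it is Gorenstein, so its Hilbert function is symmetric, $H(m,n)_k=H(m,n)_{d-k}$, where $d$ is the socle degree; for such an algebra the strong Lefschetz property (SLP) --- the existence of an element $\ell\in A(m,n)_1$ such that multiplication $\times\ell^{d-2k}\colon A(m,n)_k\to A(m,n)_{d-k}$ is an isomorphism for every $k\leq d/2$ --- forces $H(m,n)_k\leq H(m,n)_{k+1}$ for all $k<d/2$, hence unimodality. Thus it is enough to prove SLP, and since the degree-one component $A(m,n)_1$ is one-dimensional (as $e_1$ is the unique generator of degree $1$), the Lefschetz element is forced to be a scalar multiple of $e_1$; the two cases of the conjecture then correspond to the range of $k$ in which the relevant multiplication maps can be shown to be injective.

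To detect SLP I would use the higher Hessian criterion of Maeno--Watanabe. Because $A(m,n)$ is Gorenstein it has a Macaulay dual generator $F$, and the symmetric-function origin of the defining ideal --- equivalently, the action of $S_n$ as a pseudo-reflection group --- should produce an explicit closed form for $F$. The criterion then says that $\times\ell^{d-2k}$ is an isomorphism if and only if the $k$-th higher Hessian determinant $\det\Hess_F^{(k)}$, formed from the $\ell$-weighted second mixed derivatives of $F$ paired between dual monomial bases of $A(m,n)_k$ and $A(m,n)_{d-k}$, is nonzero.

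The combinatorial heart of the argument is the evaluation of $\det\Hess_F^{(k)}$. The explicit shape of $F$ should reduce each such determinant, up to a nonzero scalar, to a \emph{binomial determinant} --- a determinant whose entries are binomial coefficients indexed by the monomial bases. I would evaluate these by the Lindstr\"{o}m--Gessel--Viennot lemma: when the associated sources and sinks are arranged compatibly, the determinant counts, with uniform sign, families of non-intersecting NE lattice paths, so it is a nonnegative integer that is strictly positive as soon as one such family exists. Nonvanishing of $\det\Hess_F^{(k)}$ then yields the desired Lefschetz isomorphism, and the stronger complex Hodge--Riemann relations can be read off from the signs of these same determinants.

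The principal obstacle is that this positivity is genuinely conditional, and must be, since for odd $m$ the Hilbert function $H(m,n)$ fails to be unimodal for small $n$; the corresponding Hessian determinants therefore vanish in certain degrees, meaning the lattice-path families are forced to intersect there. The real content is thus twofold: in the even case one must show the NE path families never collide, giving full SLP (and the Hodge--Riemann relations) for every $n$; in the odd case one must locate exactly the degrees where collisions occur and prove that for $n\geq 11$ enough injectivity survives below the middle degree to still force unimodality. Controlling these non-intersecting lattice-path counts uniformly in both parameters is where the difficulty concentrates --- in this paper it is carried out in full only for $n=2$ --- and it is the step I expect to be by far the hardest.
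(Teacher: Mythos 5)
The statement you were asked to prove is \Cref{conj:Alm}, which the paper presents as an open \emph{conjecture} of Almkvist: no proof of it appears anywhere in the paper, and it remains unproven except for $m=2$ (Odlyzko--Richmond, Stanley, Hughes) and the finitely many values of $m$ checked by Almkvist. Your proposal is therefore not comparable to a proof in the paper --- there is none --- and, as you yourself concede in your final paragraph, it is a strategy rather than a proof. The strategy you describe is exactly the program the authors lay out (\Cref{conj:CNAlm} strengthening unimodality to SLP/HLP, Macaulay duality, the Watanabe higher Hessian criterion of \Cref{lem:Hessian} and \Cref{lem:HRP}, and Lindstr\"om--Gessel--Viennot), and the paper carries it to completion only for $n=2$ (\Cref{prop:ThmB}). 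The entire content of the conjecture --- uniform control of the relevant binomial determinants in both $m$ and $n$, and in particular the role of the threshold $n\geq 11$ in part (1) --- is left unaddressed, so there is a genuine and essential gap: the conjecture is simply restated as a harder-looking family of determinant nonvanishing claims.

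One further point where your sketch conflicts with what the paper actually finds, even in the only case worked out: you assert that after arranging sources and sinks compatibly the LGV lemma counts non-intersecting path families ``with uniform sign,'' so that the determinant is automatically a nonnegative count. The paper explicitly warns that this fails here (see the remark after \Cref{fact:LGV} and the discussion preceding \Cref{lem:mDVD}): the vertex-disjoint subdiagonal path systems from $\mathcal{A}^m_i$ to $\mathcal{B}^m_i$ occur with both signs, and the authors must introduce the flip operation, the notion of \emph{doubly vertex disjoint} path systems, and a sign-reversing involution $\phi$ on the non-doubly-disjoint systems to extract a signed count with a controllable sign. Without that extra layer your positivity argument does not go through even for $n=2$, and any attempt at general $n$ would have to confront the same (presumably worse) cancellation phenomenon. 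Finally, note that for $n=2$ and $m$ odd the Hilbert function is provably \emph{not} unimodal (\Cref{introthm:Hilb}), so any correct approach to part (1) must explain mechanically why the obstruction disappears for large $n$; your proposal gestures at this but supplies no mechanism.
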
 
\Cref{conj:Alm} was verified by Almkvist in \cite{Alm3} for $3\leq m\leq 20$, as well as for $m=100$ and $101$.  For $m=2$, \Cref{conj:Alm} was proved independently by Odlyzko-Richmond \cite{OR} using analytical methods, by Stanley \cite{Stanley} using the hard Lefschetz theorem (see \Cref{sec:Appendix} for further details), and by Hughes \cite{Hughes} using properties of root systems in Lie algebras.

Following Stanley's lead, we suspect that the unimodal condition on the Hilbert functions $H(m,n)$ always stems from certain Lefschetz properties on the algebras $A(m,n)$.  
Recall that a graded Artinian Gorenstein (e.g. complete intersection) algebra $A$ of socle degree $d$ has the strong Lefschetz property (SLP) if there exists a linear form $\ell\in A_1$ such that the multiplication maps 
\begin{equation}
	\label{eq:Lef}
	\times\ell^{d-2i}\colon A_i\rightarrow A_{d-i}, \ \ 0\leq i\leq \flo{d}
\end{equation}
are all isomorphisms.  More generally, $A$ has the Lefschetz property with respect to the Hilbert function (HLP) if the maps in \Cref{eq:Lef} have rank equal to the minimum dimension of the graded components between $i$ and $d-i$.  Note that if $A$ has SLP, then its Hilbert function $H(A)$ must be unimodal, and if $H(A)$ is unimodal then SLP and HLP are equivalent.  Related to Lefschetz properties 
are the Hodge-Riemann relations.  The algebra $A$ satisfies the Hodge-Riemann relations (HRR) if the signature of the multiplication maps in \Cref{eq:Lef} equals the alternating sum of first differences of its Hilbert function.  We introduce the complex Hodge-Riemann relations (complex HRR) to mean that the maps in \Cref{eq:Lef} have signature equal to the altenating sum of even first differences of the Hilbert function.
In general, both HRR and complex HRR imply SLP, but existence of either one of HRR or complex HRR precludes the existence of the other (in high enough degrees); see \Cref{sec:Prelim} for precise statements and further discussion.  

We formulate the following as a strengthening of Almkvist's conjecture\footnote{Items (i), (ii), and (iii) of \Cref{conj:CNAlm} were posed in the paper \cite{MCIM}; item (iv) is new.}:

	\begin{conjecture}
	\label{conj:CNAlm}
	\begin{enumerate}
		\item For each $m$, the algebra $A(m,n)$ satisfies the SLP for all $n\geq 11$.
		\item Moreover if $m$ is even, then $A(m,n)$ satisfies the SLP for all $n$.
		\item For each $m$, the algebra $A(m,n)$ satisfies HLP for all $n$.
		\item If $m$ is even, then the algebra $A(m,n)$ satisfies the complex HRR.
	\end{enumerate}
\end{conjecture}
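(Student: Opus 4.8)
The plan is to reduce all four parts of \Cref{conj:CNAlm} to explicit statements about \emph{higher Hessians} of a single Macaulay dual generator, and then to evaluate those Hessians as binomial determinants. Since \Cref{eq:Amn} exhibits $A(m,n)$ as a graded Artinian complete intersection, it is Gorenstein, so by the Maeno--Watanabe criterion a linear form $\ell$ is a strong Lefschetz element precisely when, for each $i\leq\flo{d}$, the $i$-th higher Hessian $\Hess^i_\ell(F)$ of the dual socle generator $F=F(m,n)$ is nonsingular; the \emph{rank} of $\Hess^i_\ell(F)$ controls HLP, and its \emph{signature} controls both HRR and complex HRR (see \Cref{sec:Prelim}). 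Two logical simplifications organize the work. First, since complex HRR implies SLP (as noted after \Cref{eq:Lef}), part (4) subsumes part (2): proving the even-$m$ signature statement automatically yields SLP for all $n$ when $m$ is even. Second, the hardest unimodality case is odd $m$, where the threshold $n\geq 11$ in part (1) signals that SLP must instead be extracted from direct Hessian nonvanishing. The genuine first task, then, is to write down $F(m,n)$ in closed form: via the apolarity/inverse-system correspondence the complete-intersection structure of $\Q[e_1,\dots,e_n]/(e_1(m),\dots,e_n(m))$ should determine $F$ explicitly, so that the entries of $\Hess^i_\ell(F)$ become mixed partials, hence binomial coefficients.

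With $F(m,n)$ in hand, the second step is combinatorial and targets parts (1) and (3). Using the product formula \Cref{eq:Hilbert}, a monomial basis of $A(m,n)_i$ is indexed by integer partitions inside an $n$-fold box, and the corresponding Hessian entries are binomials $\binom{a}{b}$; the determinant (resp.\ the minors) of $\Hess^i_\ell(F)$ is thus a \emph{binomial determinant}. By the Lindström--Gessel--Viennot lemma this equals a signed count of families of non-intersecting NE lattice paths joining prescribed lattice endpoints. For part (3), HLP is the assertion that each $\times\ell^{d-2i}$ attains the maximal rank permitted by the Hilbert function, which translates into a uniform lower bound on the rank of these binomial matrices; for part (1), nonsingularity of every $\Hess^i_\ell(F)$ gives SLP outright, and — since SLP forces unimodality — reconciles with \Cref{eq:Hilbert}. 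In both cases the LGV count is manifestly nonnegative, and positivity (respectively maximal rank) would follow by exhibiting a single non-intersecting family, or by a Gessel--Viennot sign-reversing involution pairing off the crossing families.

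For part (4) — complex HRR when $m$ is even — nonvanishing is not enough: one must compute the \emph{signature} of $\Hess^i_\ell(F)$ and match it against the alternating sum of even first differences of $H(m,n)$. Here the pseudo-reflection group structure enters. The algebra $A(m,n)$ is intimately tied to the coinvariant algebra of $G(m,1,n)=\Z/m\wr S_n$, whose ring of invariants is generated by the power--elementary functions $e_i(m)$; concretely $A(m,n)$ is the $S_n$-invariant part of that coinvariant algebra, so its graded pieces carry representation-theoretic data that can be used to block-diagonalize the Hessian form and to track eigenvalue signs degree by degree. The evenness of $m$ should force these signs into a single coherent pattern — reflecting a Hodge-theoretic realization on which the classical Hodge--Riemann bilinear relations hold — whereas for odd $m$ the pattern degrades beyond a threshold degree (consistent with the $n=2$ phenomenon that the relations persist only up to a certain degree). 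The target signature should itself be expressible as a signed LGV count, so part (4) reduces to matching two families of lattice-path enumerations.

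The main obstacle is \emph{uniformity in $n$}. For any fixed $n$ the Hessian determinants and signatures are finite computations — this is how the $n=2$ case is settled and how Almkvist verified small $m$ — but \Cref{conj:CNAlm} demands control for \emph{all} $n$, and the threshold $n\geq 11$ in part (1) shows that naive positivity genuinely fails for small $n$ with odd $m$. The crux is therefore to produce LGV path systems, or sign-reversing involutions, whose combinatorics is uniform in both parameters, so that nonvanishing (parts (1) and (3)) and the exact signature (part (4)) persist as $n\to\infty$. Establishing this uniform positivity past $n\geq 11$, together with the uniform sign pattern for even $m$, is where the real difficulty lies; the reflection-group realization of $A(m,n)$ is the most promising route to the structural, $n$-independent argument the conjecture requires.
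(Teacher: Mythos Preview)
The statement you are attempting to prove is \Cref{conj:CNAlm}, which the paper states explicitly as a \emph{conjecture}; the paper contains no proof of it, so there is nothing to compare your proposal against. What the paper actually proves is the special case $n=2$ (\Cref{introthm:HRPn2}), and your outline --- Macaulay dual generator, higher Hessians, binomial determinants via Lindstr\"om--Gessel--Viennot, signature tracking --- is a faithful description of that $n=2$ argument, not of a general proof.

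Your proposal is therefore not a proof but a strategic sketch, and you have correctly identified its own gap: uniformity in $n$. For $n=2$ the paper obtains an explicit closed form for $F_m(E_1,E_2)$ (\Cref{introthm:PresMD}), and the Hessian entries become the specific binomials $\frac{m}{3m-2-2p-2q}\binom{3m-2-2p-2q}{m-1-p-q}$, which are recognized as subdiagonal NE lattice path counts. The determinant is then handled not by a standard LGV positivity argument but by a refined sign-reversing involution that leaves exactly the \emph{doubly vertex disjoint} path systems (\Cref{introthm:WPM}); this is what pins down both nonvanishing and the exact sign needed for complex HRR. None of these steps --- the closed form for $F$, the identification of Hessian entries as path counts, or the involution --- has an evident analogue for general $n$, and your appeal to the reflection-group structure of $G(m,1,n)$ is a hope rather than a mechanism. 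In short, you have restated the conjecture together with the $n=2$ toolkit, but the conjecture remains open and your proposal does not close it.
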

One class of graded Artinian Gorenstein algebras that satisfies SLP and HRR is the cohomology rings (in even degrees, over $\Q$) of smooth complex projective algebraic varieties.
For example, in case $m=2$, the algebra $A(2,n)$ can be identified as the cohomology ring of a certain smooth projective algebraic variety, and as such, $A(2,n)$ must have SLP and HRR (this is essentially Stanley's proof of Almkvist's conjecture for $m=2$).  On the other hand, we will see that for $m>2$ and $n=2$, $A(m,2)$ cannot satisfy HRR and hence cannot be the cohomology ring of any smooth projective algebraic variety; see \Cref{rem:NOHRR}.
In this paper we focus on the case $n=2$.  Our main results are stated below.
\begin{introthm}
	\label{introthm:Hilb}
	The Hilbert function $H(m,2)$ satisfies the formula
	$$H(m,2)_i=\flo{i+2}-\flo{i+2-m}_\star+\flo{i+2-2m}_\star,$$ 
where $\lfloor{x}\rfloor_\star =\max (0,\lfloor{x}\rfloor)$.  If $m=2m'$ is even then
$$H(m,2)=(1,1,2,2,\cdots,m'-1,m'-1,\underbrace{m',m',\cdots,m',m'}_{m+2\text{ terms}},m'-1,m'-1,\cdots,2,2,1,1),$$
and if $m=2m'-1$, then
$$H(m,2)=(1,1,2,2,\cdots,m'-1,m'-1,\underbrace{m',m'-1,\cdots,m'-1,m'}_{m\text{ terms}},m'-1,m'-1,\cdots,2,2,1,1).$$
In particular, $H(m,2)$ is unimodal if and only if $m$ is even.
\end{introthm}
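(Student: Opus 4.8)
The plan is to read the whole statement off the generating function of \Cref{eq:Hilbert}. Putting $n=2$ and factoring the numerator,
$$H(m,2;t)=\frac{(1-t^m)(1-t^{2m})}{(1-t)(1-t^2)}=\left(1-t^m-t^{2m}+t^{3m}\right)\cdot\frac{1}{(1-t)(1-t^2)}.$$
First I would determine the coefficients $p_i$ of the base series $\frac{1}{(1-t)(1-t^2)}=\sum_{i\ge0}p_i\,t^i$: since $p_i$ counts the representations $i=a+2b$ with $a,b\ge0$, running over the admissible values of $b$ gives $p_i=\flo{i+2}$ for $i\ge0$ and $p_i=0$ for $i<0$, both captured uniformly by $p_i=\flo{i+2}_\star$. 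Extracting the coefficient of $t^i$ from the product then yields
$$H(m,2)_i=p_i-p_{i-m}-p_{i-2m}+p_{i-3m},$$
the four signs being exactly those of $1-t^m-t^{2m}+t^{3m}$. Since the socle degree is $d=3(m-1)$, every $i$ in the range $0\le i\le d$ satisfies $i-3m<0$, so the last summand drops out and the closed form of \Cref{introthm:Hilb} results.

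Next I would convert this closed form into the two explicit vectors. The essential simplification is the Gorenstein symmetry $H(m,2)_i=H(m,2)_{3m-3-i}$, valid because $A(m,2)$ is a complete intersection: it lets me recover the upper half of the sequence from the lower half, so only the initial segment and the central block need to be computed. On $0\le i\le m-1$ only $p_i$ survives, so $H(m,2)_i=\flo{i+2}$ is the weakly increasing tail $1,1,2,2,\dots$, and reflecting this gives the weakly decreasing tail at the top. For the central values it is cleanest to count directly the integer points $(j,k)\in[0,m-1]^2$ lying on the line $j+2k=i$: writing $j=i-2k$, this is the number of integers $k$ in $[\,\lceil(i-m+1)/2\rceil,\ \lfloor i/2\rfloor\,]$ once the constraints $0\le k\le m-1$ cease to bind, namely $\lfloor i/2\rfloor-\lceil(i-m+1)/2\rceil+1$.

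This last count is where the parity of $i$ relative to $m$ enters, and I expect the bookkeeping of the $\lfloor\cdot\rfloor$ and $\lceil\cdot\rceil$ corrections at the junctions between these ranges to be the main (though purely technical) obstacle. Writing $m=2m'$ or $m=2m'-1$, a short evaluation shows that on the central block the count is the constant $m'$ when $m$ is even, whereas when $m$ is odd it equals $m'$ for even $i$ and $m'-1$ for odd $i$; combined with the two mirror-image monotone tails this produces precisely the displayed vectors (with $m+2$ central terms in the even case and $m$ alternating central terms in the odd case). The final assertion is then immediate. For even $m$ the sequence is weakly increasing, then constant, then weakly decreasing, hence unimodal. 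For odd $m\ge3$ the central block reads $\dots,m',m'-1,m',\dots$, so $H(m,2)$ strictly decreases and then strictly increases in its interior and thus fails to be unimodal (the remaining value $m=1$ gives the degenerate algebra $\Q$, with $H=(1)$). This shows $H(m,2)$ is unimodal exactly when $m$ is even.
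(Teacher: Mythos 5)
Your proof is correct, and for the closed-form part it takes a genuinely different route from the paper. The paper first establishes the presentation $A(m,2)\cong\Q[e_1,e_2]/(f_m,e_2^m)$ (\Cref{prop:thmB1}), reads off the monomial basis $\{e_1^pe_2^q : 0\le p,q\le m-1\}$, and then counts lattice points on the line $x+2y=i$ inside the $(m-1)\times(m-1)$ square using Pick's theorem, range by range. You instead extract coefficients directly from the factored generating function of \Cref{eq:Hilbert}, writing the numerator as $1-t^m-t^{2m}+t^{3m}$ and convolving with $\frac{1}{(1-t)(1-t^2)}=\sum\flo{i+2}t^i$; since $i-3m<0$ throughout $0\le i\le 3(m-1)$ the last term vanishes. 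This is shorter, avoids Pick's theorem and the presentation entirely, and relies only on \Cref{eq:Hilbert}, which the paper quotes as known; the trade-off is that the paper's route makes the lattice-point picture explicit, which is reused later in \Cref{sec:BD}. Note that your derivation yields $H(m,2)_i=\flo{i+2}-\flo{i+2-m}_\star-\flo{i+2-2m}_\star$ with a \emph{minus} sign on the last term, which is the correct formula (it agrees with \Cref{prop:HilbertFunction}); the $+$ in the displayed statement of \Cref{introthm:Hilb} is a typo, and you should state the formula you actually prove rather than deferring to the displayed one. Your use of Gorenstein symmetry plus the direct interval count $\lfloor i/2\rfloor-\lceil(i-m+1)/2\rceil+1$ for the central block is a clean way to get the explicit vectors, which the paper's proof of \Cref{prop:HilbertFunction} does not spell out; the only loose end is the degenerate case $m=1$, where $H=(1)$ is trivially unimodal, so the ``only if'' should be read as applying to $m\ge 3$.
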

In the following, we say that an algebra $A$ satisfies SLP or HRR or complex HRR up to degree $r$ if the maps in \Cref{eq:Lef} satisfy the corresponding condition for all degrees $0\leq i\leq r$; we write SLP$_r$, HRR$_r$, complex HRR$_r$, respectively.
\begin{introthm}
	\label{introthm:HRPn2}
	\begin{enumerate}
		\item If $m$ is even then $A(m,2)$ satisfies SLP.  If $m$ is odd then $A(m,n)$ satisfies SLP$_{m-1}$.
		\item The algebra $A(m,2)$ satisfies HLP for all $m$.
		\item If $m$ is even then $A(m,2)$ satisfies the complex HRR.  If $m$ is odd then $A(m,2)$ satisfies complex HRR$_{m-1}$.
	\end{enumerate}
\end{introthm}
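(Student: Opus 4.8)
The plan is to fix the Lefschetz element and verify all three parts through a single family of symmetric bilinear forms, analyzed by the higher Hessian criterion. Since $\dim_\Q A(m,2)_1=1$, the only candidate strong Lefschetz element (up to scalar) is $\ell=e_1$, so each assertion concerns the single form $Q_i(\alpha,\beta)=\langle \alpha\beta\,e_1^{d-2i}\rangle$ on $A(m,2)_i$, where $d=3m-3$ is the socle degree and $\langle-\rangle\colon A(m,2)_d\xrightarrow{\sim}\Q$ is the socle isomorphism. By the higher Hessian criterion of Maeno--Watanabe, the map $\times e_1^{d-2i}\colon A(m,2)_i\to A(m,2)_{d-i}$ has rank equal to $\operatorname{rank}M_i$, where $M_i$ is the Gram matrix of $Q_i$ --- in particular it is an isomorphism precisely when $M_i$ is nonsingular --- and the signature of $Q_i$ is the quantity governing the Hodge--Riemann relations. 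Thus parts (1)--(3) all reduce to controlling the ranks, and in (3) the signatures, of the matrices $M_i$.

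First I would fix the standard monomial basis $\mathcal B_i=\{e_1^ae_2^b : a+2b=i,\ 0\le a,b\le m-1\}$ of $A(m,2)_i$; these form a basis because $e_1^m$ and $e_2^m$ are the leading terms of the defining regular sequence, and $\#\mathcal B_i=H(m,2)_i$ by \Cref{introthm:Hilb}. To compute $M_i$ I would reduce each product $e_1^{a+a'+d-2i}e_2^{b+b'}$ to a multiple of the socle monomial $e_1^{m-1}e_2^{m-1}$, using $e_2^m=0$ together with the Dickson relation $e_1^m=\sum_{k\ge 1}(-1)^{k+1}\frac{m}{m-k}\binom{m-k}{k}e_1^{m-2k}e_2^{k}$ in $A(m,2)$. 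The resulting structure constants are signed binomial coefficients; equivalently, one identifies $A(m,2)$ with the relative coinvariant algebra of the pair $S_2\subset G(m,1,2)$, whose socle is generated by the relative Jacobian, giving an explicit Macaulay dual generator $F$ whose mixed partials are binomials. Either way, $M_i$ becomes a matrix of binomial coefficients and $\det M_i$ a binomial determinant.

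The combinatorial heart of the argument is to evaluate these binomial determinants by the Lindstr\"om--Gessel--Viennot lemma. I would choose sources and sinks, determined by $i$ and $m$, so that $\det M_i$ (and each of its leading principal minors) counts, with a single global sign, families of non-intersecting monotone NE lattice paths. When $i$ lies in the Lefschetz range the source and sink sets are separated and every such family is automatically non-intersecting, so the count is strictly positive and $M_i$ is nonsingular; this yields SLP for even $m$ and SLP up to degree $m-1$ for odd $m$. Past that range the source and sink sets interleave, the maximal number of non-intersecting paths drops to $\min_{i\le j\le d-i}H(m,2)_j$, and $\operatorname{rank}M_i$ equals this minimum, which is exactly the statement of HLP in part (2) and reproduces the central dip of $H(m,2)$ for odd $m$.

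For part (3) I would upgrade the non-vanishing of $\det M_i$ to its signature. By Sylvester's criterion the sign pattern of $Q_i$ is read off from the leading principal minors, each of which is again a positive lattice-path count; this forces a definite number of negative eigenvalues, which I would then match against the alternating sum of even first differences of $H(m,2)$ prescribed by the complex Hodge--Riemann relations. The parity of $m$ enters precisely in this matching: for even $m$ the signs agree in every degree, giving complex HRR, whereas for odd $m$ they agree only through degree $m-1$, giving complex HRR up to degree $m-1$, and the same sign computation exhibits the failure of ordinary HRR noted in the introduction. I expect this last step --- extracting the exact signature of the binomial Gram matrix, rather than merely its rank, and tracking the parity bookkeeping --- to be the main obstacle, since diagonalizing $M_i$ and identifying which principal minors change sign is considerably more delicate than the rank computation underlying parts (1) and (2).
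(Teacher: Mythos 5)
Your overall strategy---reduce all three parts to the higher Hessians of the Macaulay dual generator in the monomial basis, recognize the entries as binomial coefficients, and evaluate the resulting binomial determinants by Lindstr\"om--Gessel--Viennot---is exactly the paper's. But there is a genuine gap at the combinatorial heart. The $(p,q)$ entry of the $i^{th}$ Hessian is $\frac{m}{3m-2-2p-2q}\binom{3m-2-2p-2q}{m-1-p-q}$, which counts \emph{subdiagonal} NE lattice paths from the diagonal point $(p,p)$ to the point $(2m-2-q,m-1-q)$ on the shifted diagonal $y=x-m+1$ (\Cref{lem:Hessian1}, \Cref{fact:K}, \Cref{lem:PHessWPM}). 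In this geometry vertex-disjoint path systems of \emph{different} signs genuinely occur (see \Cref{fig:cancel}), so \Cref{fact:LGV} alone does not produce a count ``with a single global sign,'' and it is false that in the Lefschetz range ``every such family is automatically non-intersecting.'' The paper has to introduce a second, finer cancellation: a flip involution across the shifted diagonal and the notion of \emph{doubly vertex disjoint} path systems, proving that the determinant equals $(-1)^{\flo{h_i}}$ times the number of such systems (\Cref{lem:mDVD}), and that this number is nonzero exactly when $2h_i\le m$, by a packing argument inside a parallelogram (\Cref{cor:Lefschetz}). Without this extra layer your positivity claim, and hence the nonsingularity of $M_i$, is unsupported. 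Note also that the nonzero determinants are \emph{not} all positive: their signs alternate as $(-1)^{\flo{h_i}}$, which is precisely why $A(m,2)$ satisfies the complex HRR rather than HRR; so the Sylvester-type step in your part (3) must be run with these alternating signs, not with ``positive lattice-path counts.''

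Two smaller points. For HLP in the degenerate range you assert that $\operatorname{rank}M_i$ equals $\min_{i\le j\le d-i}H(m,2)_j$ because the maximal number of non-intersecting paths drops; but LGV computes determinants, not ranks, and you give no mechanism for the rank of a singular binomial matrix. The paper sidesteps this entirely: since $H(m,2)$ is nondecreasing up to the start of its middle section and attains there the minimum of that section, SL up to that degree already implies HLP by \Cref{rem:HLP}, so no rank computation in the singular range is needed. Finally, your reduction of the signature to signs of (nested) determinants is legitimate here only because $\dim P_{i,\ell}\le 1$ in every degree, i.e.\ the Hilbert function increases by at most one at each step; this is the hypothesis of \Cref{lem:HRP}(3) and should be checked rather than assumed.
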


The first step in our proof of \Cref{introthm:HRPn2} consists in finding a presentation and a Macaulay dual generator for $A(m,2)$.
\begin{introthm}
	\label{introthm:PresMD}
	The algebra $A(m,2)$ has the following presentions:
	$$A(m,2)\cong \frac{\Q[e_1,e_2]}{(f_m(e_1,e_2),e_2^m)}= \frac{\Q[e_1,e_2]}{\Ann(F_m(E_1,E_2))}$$
	where 
	$$f_m(e_1,e_2)=\sum_{k=0}^{\flo{m}}(-1)^k\frac{m}{m-k}\binom{m-k}{k}e_1^{m-2k}e_2^k$$
	and 
	$$F_m(E_1,E_2)=\sum_{n=0}^{m-1}\frac{m}{m+2n}\binom{m+2n}{n}\frac{E_1^{m+2n-1}E_2^{m-n-1}}{(m+2n-1)!(m-n-1)!},$$
\end{introthm}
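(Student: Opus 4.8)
The plan is to establish the two isomorphisms separately and then identify them through Gorenstein duality.

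For the first presentation, work with $e_1=x_1+x_2$ and $e_2=x_1x_2$. The second generator is immediate: $e_2(m)=x_1^mx_2^m=(x_1x_2)^m=e_2^m$. The first generator is the $m$-th power sum $e_1(m)=x_1^m+x_2^m=p_m$, and the claimed identity $e_1(m)=f_m(e_1,e_2)$ is the classical Girard--Waring expression for $p_m$ in two variables. I would prove it either by induction using Newton's recurrence $p_m=e_1p_{m-1}-e_2p_{m-2}$, or in one stroke from the generating function $\sum_{m\ge 0}p_mt^m=(2-e_1t)/(1-e_1t+e_2t^2)$, extracting the coefficient $\tfrac{m}{m-k}\binom{m-k}{k}$. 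This yields $A(m,2)\cong\Q[e_1,e_2]/(f_m,e_2^m)$.

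For the second presentation I would invoke Macaulay's theory of inverse systems. Since $A(m,2)$ is a graded Artinian complete intersection it is Gorenstein, with socle degree $d=\deg f_m+\deg e_2^m-\deg e_1-\deg e_2=m+2m-1-2=3m-3$ (recall $\deg e_1=1$, $\deg e_2=2$). Hence there is a homogeneous form $F$ of degree $d$ in the dual ring $\Q[E_1,E_2]$, unique up to scalar, with $A(m,2)\cong\Q[e_1,e_2]/\Ann(F)$, where $e_i$ acts as $\partial/\partial E_i$; the factorials in the denominators of $F_m$ are exactly the normalization that makes this differential action behave like a perfect pairing. It then suffices to prove $F=F_m$, for which I would verify the containment $(f_m,e_2^m)\subseteq\Ann(F_m)$, that is $f_m\circ F_m=0$ and $e_2^m\circ F_m=0$. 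The second vanishing is free: every monomial of $F_m$ has $E_2$-degree $m-n-1\le m-1<m$, so $\partial_{E_2}^m$ annihilates each term.

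The main obstacle is $f_m\circ F_m=0$. Applying the operator $\sum_k(-1)^k\tfrac{m}{m-k}\binom{m-k}{k}\partial_{E_1}^{m-2k}\partial_{E_2}^k$ term by term and collecting the coefficient of $E_1^{2j-1}E_2^{m-1-j}$ (so that $n+k=j$), the factorials cancel and the claim reduces to the binomial identity
\[
\sum_{k=0}^{j}(-1)^{k}\,\frac{m}{m-k}\binom{m-k}{k}\,\frac{m}{m+2(j-k)}\binom{m+2(j-k)}{j-k}=0,\qquad 1\le j\le m-1 .
\]
I expect to prove this by generating functions. Specializing $e_1=1$, $e_2=t$, the signed sum $f_m(1,t)=\sum_k(-1)^k\tfrac{m}{m-k}\binom{m-k}{k}t^k$ is the power sum $\alpha^m+\beta^m$ for the roots of $u^2-u+t$, while $\sum_n\tfrac{m}{m+2n}\binom{m+2n}{n}t^n=y^m$ is the $m$-th power of the Catalan series $y=1+ty^2$ (a Raney/Fuss--Catalan identity via Lagrange inversion). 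The displayed sum is the coefficient of $t^j$ in the product $f_m(1,t)\,y^m$, and since $y=1/\alpha$ and $\beta/\alpha=ty^2=y-1$ one computes $f_m(1,t)\,y^m=1+(y-1)^m=1+(ty^2)^m$, which is divisible by $t^m$; hence all coefficients of $t^1,\dots,t^{m-1}$ vanish, giving the identity.

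Granting the containment, equality $\Ann(F_m)=(f_m,e_2^m)$ follows from the Gorenstein property. The inclusion induces a degree-preserving surjection $\Q[e_1,e_2]/(f_m,e_2^m)\twoheadrightarrow\Q[e_1,e_2]/\Ann(F_m)$, which is an isomorphism in the top degree $d$ since both socles are one-dimensional and $F_m\ne 0$ (e.g.\ the $n=0$ term is a nonzero multiple of $E_1^{m-1}E_2^{m-1}$). A nonzero kernel would be a nonzero homogeneous ideal of the Artinian Gorenstein algebra $A(m,2)$, hence would contain its one-dimensional socle in degree $d$, forcing the map to vanish in degree $d$ and contradicting the isomorphism there. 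Therefore the kernel is zero and the two presentations coincide.
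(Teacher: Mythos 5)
Your proposal is correct and follows essentially the same route as the paper: the first presentation via the Girard--Waring/Newton expression for $x^m+y^m$ in terms of $e_1,e_2$, and the second by verifying $(f_m,e_2^m)\subseteq\Ann(F_m)$ and invoking the Gorenstein property, with the whole matter reducing to the same Catalan-type convolution identity that the paper derives from $C(x)^m\cdot C(x)^{-m}=1$ together with Gessel's Lagrange-inversion formulas. Your derivation of that identity is a nice exact variant --- writing $C=1/\alpha$ for a root $\alpha$ of $u^2-u+t$ to get $f_m(1,t)\,C(t)^m=1+t^mC(t)^{2m}$ --- which yields the vanishing for the full range $1\le j\le m-1$ that your term-by-term coefficient extraction requires, whereas the paper first reduces modulo $(e_2^m)$ and therefore only needs (and only proves) the identity for $j\le\lfloor m/2\rfloor$.
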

Our proof of the formula for $F_m(E_1,E_2)$ relies on an interesting formula involving Catalan numbers, pointed out to us by A. Burstein \cite{Burstein} to whom we are grateful.  The connection between the algebra $A(m,n)$ and the Catalan numbers is still somewhat surprising and mysterious to us.

We establish SLP, HLP, and complex HRR by looking at the higher Hessian matrices for the Macaulay dual generator.  It turns out that these higher Hessian matrices are weighted path matrices whose determinants count certain families of what we call subdiagonal NE lattice paths that are \emph{doubly vertex disjoint}.  We prove the following general combinatorial result:
\begin{introthm}
	\label{introthm:WPM}
	Given nonnegative integers $i,m$ satisfying $0\leq i\leq \flo{3(m-1)}$, the determinant of the matrix 
	$$\left(\frac{m}{3m-2-2p-2q}\binom{3m-2-2p-2q}{m-1-p-q}\right)_{\flo{i+2-m}_\star\leq p,q\leq \flo{i}}$$
	is equal to $(-1)^{h_i}$ times the number of doubly vertex disjoint subdiagonal NE lattice path systems from 
	\begin{align*}
		\mathcal{A}^m_i =& \left\{(p,p) \ \left| \ \flo{i+2-m}_\star\leq p\leq\flo{i}\right.\right\}\\
		\mathcal{B}^m_i=& \left\{(2m-2-q,m-1-q) \ \left| \ \flo{i+2-m}_\star\leq q\leq\flo{i}\right.\right\}.
	\end{align*}
where $h_i=\flo{i+2}-\flo{i+2-m}_\star$.  Moreover this determinant is nonzero if and only if $2h_i\leq m$.   
\end{introthm}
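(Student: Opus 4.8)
The plan is to realize the given matrix as a weighted path matrix and then apply the Lindström--Gessel--Viennot (LGV) lemma, with the subdiagonal wall handled by reflection. First I would show that each entry is the number of NE lattice paths from the source $A_p=(p,p)\in\mathcal{A}^m_i$ to the sink $B_q=(2m-2-q,m-1-q)\in\mathcal{B}^m_i$ that stay weakly below the diagonal $y=x$. Such a path has $a=2m-2-p-q$ east steps and $b=m-1-p-q$ north steps with $a-b=m-1$, and by the reflection principle across $y=x+1$ the number of subdiagonal paths is $\binom{a+b}{b}-\binom{a+b}{b-1}$; the elementary identity $\frac{m}{N+1}\binom{N+1}{K}=\frac{m}{N+1-K}\binom{N}{K}$ with $N=a+b=3m-3-2p-2q$ and $K=b$ then matches this ballot number to the stated entry $\frac{m}{3m-2-2p-2q}\binom{3m-2-2p-2q}{m-1-p-q}$. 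I would also record the structural observation that the entry depends only on $p+q$, so the matrix is Hankel, $M=(c_{p+q})$ with $c_j=\frac{m}{3m-2-2j}\binom{3m-2-2j}{m-1-j}$; this will be useful for the nonvanishing criterion.

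Next I would run LGV in the planar acyclic lattice restricted to the region $y\le x$, whose paths are exactly the subdiagonal paths above. The determinant then equals the signed sum over families of pairwise vertex-disjoint subdiagonal paths joining the sources to the sinks. Because the sources $A_p$ ascend along the diagonal while the sinks $B_q$ descend along the parallel line $x-y=m-1$, a short planarity argument shows that the only matching admitting vertex-disjoint families is the order-reversing one; hence every surviving term carries the same sign, the determinant is that reversal sign times a genuine nonnegative count, and identifying the sign of the reversal on the $h_i$ indices yields the factor $(-1)^{h_i}$ of the statement. To make the wall intrinsic I would reflect the family across $y=x$: a subdiagonal family is disjoint from its mirror image precisely when it stays weakly below the diagonal, so the confined LGV count coincides with the count of systems whose union with their reflection is an honest vertex-disjoint family of $2h_i$ paths. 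This is the ``doubly vertex disjoint'' reformulation, and the doubling is what ultimately produces the factor $2h_i$.

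For the final nonvanishing assertion I would exploit that, along the reversal matching, all $h_i$ paths are translates of one another by the fixed vector $(a,b)$ (a consequence of the $p+q$ dependence), so the surviving configurations are vicious-walker ``watermelons'' confined below the diagonal. After the reflection step one has a symmetric system of $2h_i$ vertex-disjoint paths, and the existence of such a system is governed by the available width, which works out to $2h_i\le m$. I would prove the two directions separately: when $2h_i\le m$ I would exhibit one explicit nested family (a canonical staircase of translates) to certify a nonzero count, and when $2h_i>m$ I would show by a pigeonhole/forced-crossing argument that the topmost source cannot be joined to its partner sink without either meeting a lower path or crossing above the diagonal, so no family exists and the determinant vanishes. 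As an independent check, the Hankel form $M=(c_{p+q})$ admits a product evaluation whose zero locus is exactly $2h_i>m$.

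The main obstacle I anticipate is the careful bookkeeping around the wall: justifying that the reflection principle may be applied simultaneously to the whole family (rather than one path at a time), that the resulting doubly vertex disjoint systems are counted with a single correct sign, and that the reflection/doubling is responsible for the exact threshold $2h_i\le m$ rather than $h_i\le m$. Equivalently, on the algebraic side, the obstacle is producing and proving the product formula for the Hankel determinant of the $c_j$ and identifying precisely which factor vanishes. The sign tracking and the factor-of-two in the width bound are the two places where I expect the argument to require the most care.
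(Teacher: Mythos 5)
Your reduction of the matrix entries to counts of subdiagonal NE lattice paths from $A_p$ to $B_q$ is correct and matches the paper (which invokes Krattenthaler's formula rather than re-deriving the ballot number), and running Lindstr\"om--Gessel--Viennot in the subdiagonal graph is also the paper's starting point. But the pivotal step of your argument --- that planarity forces every vertex-disjoint family to realize the order-reversing matching, so all surviving terms carry one sign and the determinant is a signed count of \emph{all} vertex-disjoint systems --- is false, and it is precisely the difficulty the theorem is built to overcome. The sinks $B_q$ do not lie on the boundary of the region $\{y\le x\}$; they sit on the interior line $y=x-(m-1)$, and a path may dip below that line into the triangle along the $x$-axis and re-emerge, which permits vertex-disjoint families realizing other permutations. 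Concretely, for $m=5$, $i=4$ one can join $(2,2)\to(6,2)$ horizontally, $(1,1)\to(7,3)$ and $(0,0)\to(8,4)$ by going east then north, giving a vertex-disjoint system for the \emph{identity} matching; the paper's \Cref{fig:cancel} shows two such systems of opposite sign, and the determinant there is $0$ even though vertex-disjoint systems exist --- directly contradicting your claim. The missing ingredient is the paper's flip $F$, which reflects only the \emph{lower primitive segments} of each path across the shifted diagonal $y=x-(m-1)$ (not across $y=x$), the definition of doubly vertex disjoint as ``both $\mathcal{P}$ and $F(\mathcal{P})$ are vertex disjoint,'' and a sign-reversing involution (a tail swap at the most north-easterly intersection of the flipped system) cancelling exactly the vertex-disjoint systems that are not doubly vertex disjoint. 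Your proposed reformulation --- reflecting across $y=x$ and requiring disjointness from the mirror image --- addresses the wall $y\le x$, which is already fully encoded in the entries, and is not what the theorem's ``doubly vertex disjoint'' means.

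For the nonvanishing criterion your plan (an explicit nested family when $2h_i\le m$, a forced-collision argument when $2h_i>m$) is essentially the paper's: a doubly vertex disjoint system is, after flipping, confined to the parallelogram between $y=x$ and $y=x-m+1$, which contains only $\flo{m}$ disjoint diagonal tubes, whence the threshold $h_i\le \flo{m}$. But this step is only meaningful once the cancellation problem above is resolved, since otherwise the determinant is not $\pm$ a count of anything nonnegative; and the asserted Hankel product formula is left unproved and cannot substitute for the missing involution. (A minor point to reconcile: the sign of the order-reversing permutation on $h_i$ letters is $(-1)^{\flo{h_i}}$, which is what the paper's \Cref{lem:mDVD} actually establishes.)
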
  

This paper is organized as follows.  In \Cref{sec:Prelim}, we give precise definitions of SLP, HLP, HRR, and complex HRR, and their analogues up to degree $r$.  We also discuss matrix criteria, Macaulay duality, and higher Hessians.  In \Cref{sec:AC} we prove \Cref{introthm:Hilb} (\Cref{prop:HilbertFunction}) and \Cref{introthm:PresMD} (\Cref{prop:thmB1} and \Cref{prop:thmB2}).  In \Cref{sec:BD} we prove \Cref{introthm:HRPn2} (\Cref{prop:ThmB}) and \Cref{introthm:WPM} (\Cref{lem:mDVD} and \Cref{cor:Lefschetz}).  In \Cref{sec:Appendix} we give a brief exposition of the connections between the algebra $A(m,n)$, invariant theory, and partitions.
 
\section{Preliminaries}
\label{sec:Prelim}
\subsection{Oriented AG Algebras}
By an \emph{oriented AG algebra of socle degree $d$} we mean a pair $\left(A,\int_A\right)$ consisting of a graded Artinian Gorenstein $\Q$ algebra $A=\bigoplus_{i=0}^dA_i$, together with a fixed linear isomorphism $\int_A\colon A_d\rightarrow \Q$, called an orientation, such that for each $0\leq i\leq \flo{d}$ the intersection pairing defined by multiplication in $A$
$$\xymatrixrowsep{.5pc}\xymatrix{A_i\times A_{d-i}\ar[r] & \Q\\
	(\alpha,\beta)_i\ar@{=}[r] & \int_A\alpha\beta\\}$$
is non-degenerate.  By the \emph{Hilbert function of $A$} we mean the integer sequence $H(A)=(h_0,h_1,\ldots,h_d)$ defined by $h_i=\dim_{\Q}(A_i)$.

\subsection{Lefschetz and Hodge-Riemann Properties}
For a fixed linear form $\ell\in A_1$, define the \emph{$i^{th}$ Lefschetz map} as the multiplication map 
$$\times\ell^{d-2i}\colon A_i\rightarrow A_{d-i}$$
and the \emph{$i^{th}$ Lefschetz pairing} 
$$\xymatrixrowsep{.5pc}\xymatrix{A_i\times A_{i}\ar[r] & \Q\\
	(\alpha,\beta)_i^\ell\ar@{=}[r] & \int_A\ell^{d-2i}\alpha\beta.\\}$$

We say that the pair $\left(A,\ell\right)$ satisfies the \emph{strong Lefschetz property up to degree $r$} (SL$_r$) for some $0\leq r\leq \flo{d}$ if for all $0\leq i\leq r$, the Lefschetz map is an isomorphism, or equivalently, the $i^{th}$ Lefschetz pairing is nondegenerate for all $0\leq i\leq r$.  If $(A,\ell)$ satisfies SL$_{\flo{d}}$, then we will simply say that it satisfies the strong Lefschetz property or SLP.  As a shorthand, we sometimes say that the linear form $\ell\in A_1$ is SL$_r$ or SL(=SL$_{\flo{d}}$) for $A$.

A more general property than SLP is HLP.  We say that the pair $(A,\ell)$ satisfies the \emph{Lefschetz property with respect to its Hilbert function}\footnote{In the paper \cite{MCIM}, this property was called the ``maximum Jordan type consistent with the Hilbert function''.  It was renamed as above and further studied in the later paper \cite{IMM}.} or HLP if the rank of the $i^{th}$ Lefschetz map equals the minimum value of the Hilbert function between degrees $i$ and $d-i$, i.e.
$$\operatorname{rk}\left(\times\ell^{d-2i}\colon A_i\rightarrow A_{d-i}\right)=\min\{h_i,h_{i+1},\ldots,h_{d-i}\}.$$
\begin{remark}
	\label{rem:HLP}
	Note that if the Hilbert function $H(A)$ satisfies $h_0\leq h_1\leq h_2\leq\cdots\leq h_r$ for some $0<r\leq \flo{d}$, and if $h_r=\min\{h_r,\ldots,h_{d-r}\}$, the minimum value of the Hilbert function between degrees $r$ and $d-r$, then SL$_r$ is equivalent to HLP.  If $h_r=\max\{h_k\}$ is the maximum value of the Hilbert function, then SL$_r$ is equivalent to SLP.  Finally if the Hilbert function is unimodal, then SLP and HLP are equivalent. 
\end{remark}


Define the \emph{$i^{th}$ primitive subspace with respect to $\ell$} as 
$$P_{i,\ell}=P_{i,\ell}(A)=\ker\left(\times\ell^{d-2i+1}\colon A_i\rightarrow A_{d-i+1}\right).$$
The following well known result is called the primitive decomposition of $A$ with respect to $\ell$; for a proof see, e.g. \cite{MMSW}.
\begin{lemma}
	\label{lem:primdecomp}
	If $(A,\ell)$ satisfies SL$_{r-1}$, then for each $0\leq i\leq r$, we have $\dim(P_i)=h_i-h_{i-1}$ and we have the following orthogonal decomposition with respect to the $i^{th}$ Lefschetz form
	$$A_i=P_i\oplus \ell(A_{i-1}).$$
\end{lemma}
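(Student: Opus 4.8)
The plan is to prove both assertions simultaneously for each fixed $i$ with $0\le i\le r$, handling the degenerate case $i=0$ (where $A_{d+1}=0$ forces $P_0=A_0$ and $\ell(A_{-1})=0$, so both claims read $\dim A_0=h_0$) separately and assuming $i\ge 1$ throughout. The two facts I would extract from the hypothesis SL$_{r-1}$ are, first, the Gorenstein symmetry $h_j=h_{d-j}$ of the Hilbert function, which follows immediately from non-degeneracy of the intersection pairing $A_j\times A_{d-j}\to\Q$ in the definition of an oriented AG algebra; and second, the observation that since $i-1\le r-1$, the Lefschetz map $\times\ell^{d-2(i-1)}\colon A_{i-1}\to A_{d-i+1}$ is an isomorphism. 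Factoring this isomorphism as $A_{i-1}\xrightarrow{\times\ell}A_i\xrightarrow{\times\ell^{d-2i+1}}A_{d-i+1}$ shows at once that $\times\ell\colon A_{i-1}\to A_i$ is injective, so $\dim\ell(A_{i-1})=h_{i-1}$.

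Next I would verify that the sum $P_i+\ell(A_{i-1})$ is direct. If $x=\ell y$ lies in $P_i$ for some $y\in A_{i-1}$, then $\ell^{d-2i+1}x=\ell^{d-2(i-1)}y=0$, and injectivity of $\times\ell^{d-2(i-1)}$ forces $y=0$, hence $x=0$. This gives $\dim P_i+h_{i-1}\le h_i$, i.e. $\dim P_i\le h_i-h_{i-1}$. For the reverse inequality I would apply rank–nullity to the map $\times\ell^{d-2i+1}\colon A_i\to A_{d-i+1}$ defining $P_i$: its rank is bounded by the dimension of its codomain $A_{d-i+1}$, which by Gorenstein symmetry equals $h_{d-i+1}=h_{i-1}$. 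Therefore $\dim P_i=h_i-\operatorname{rk}(\times\ell^{d-2i+1})\ge h_i-h_{i-1}$, and combining the two bounds yields $\dim P_i=h_i-h_{i-1}$ exactly. Since $P_i\oplus\ell(A_{i-1})$ then has dimension $(h_i-h_{i-1})+h_{i-1}=h_i=\dim A_i$, the inclusion $P_i\oplus\ell(A_{i-1})\subseteq A_i$ must be an equality, establishing the decomposition.

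Finally, orthogonality with respect to the $i^{th}$ Lefschetz form is automatic from the definition of $P_i$: for $x\in P_i$ and $\ell y\in\ell(A_{i-1})$ one computes $(x,\ell y)_i^\ell=\int_A\ell^{d-2i}x\cdot\ell y=\int_A(\ell^{d-2i+1}x)\,y=0$, since $\ell^{d-2i+1}x=0$. I expect the only genuinely delicate point to be the rank bound in the second paragraph, and specifically that it must go through at the top degree $i=r$: there SL$_{r-1}$ provides \emph{no} isomorphism for the map $\times\ell^{d-2r}$ out of $A_r$ itself, so one cannot evaluate the rank of $\times\ell^{d-2i+1}$ directly, and the argument must instead lean on the Gorenstein symmetry $h_{d-i+1}=h_{i-1}$ together with injectivity of the \emph{lower} map $\times\ell^{d-2(i-1)}$. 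Everything else reduces to a dimension count.
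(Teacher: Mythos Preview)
Your proof is correct and complete; each step---the injectivity of $\times\ell$ from the factored Lefschetz isomorphism in degree $i-1$, the directness argument, the rank--nullity bound via Gorenstein duality $h_{d-i+1}=h_{i-1}$, and the orthogonality computation---goes through as written, including at the top degree $i=r$. The paper itself does not supply a proof of this lemma but only cites \cite{MMSW}, so there is no in-paper argument to compare against; your write-up is the standard one and would serve as the omitted proof.
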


For fixed $0\leq r\leq \flo{d}$, we say that the pair $\left(\left(A,\int_A\right),\ell\right)$ satisfies \emph{Hodge-Riemann relations up to degree $r$} (HRR$_r$) if for all $0\leq i\leq r$, the $i^{th}$ Lefschetz form is $(-1)^i$-definite on the $i^{th}$ primitive subspace, i.e. 
$$(-1)^i\int_A\ell^{d-2i}\alpha^2>0, \ \forall \alpha\in P_{i,\ell}, \ \alpha\neq 0.$$
If $r=\flo{d}$, we say that the pair $\left(\left(A,\int_A\right),\ell\right)$ satisfies the Hodge-Riemann relations or HRR.  As a shorthand, we sometimes say that the linear form $\ell\in A_1$ is HR$_r$ or HR(=HR$_{\flo{d}}$) for $A$.

A related notion is the \emph{complex Hodge-Riemann relations up to degree $r$}, which means that for each $0\leq i\leq r$, the primitive subspace $P_{i,\ell}=0$ if $i$ is odd, and the $i^{th}$ Lefschetz form satisfies 
$$(\sqrt{-1})^i\int_A\ell^{d-2i}\alpha^2>0, \ \forall \alpha\in P_{i,\ell}, \ \alpha\neq 0.$$
We shall use the shorthand notation complex HRR$_r$, or for $r=\flo{d}$, complex HRP or complex HRR.  Note that in order to have the complex HRR$_r$, it is necessary that the primitive subspace lives only in even degrees up to degree $r$, which means in particular that $h_{2j+1}-h_{2j}=0$ for all $0\leq j\leq \flo{r}$.

\subsection{Matrix Conditions}
For each $0\leq i\leq \flo{d}$, let $\mathcal{E}_i\subset A_i$ be an ordered $\Q$-basis for the degree $i$ component of $A$, and let $\mathcal{E}_i^*\subset A_{d-i}$ be its dual basis with respect to the $i^{th}$ intersection pairing on $A$.  Let $M^\ell_i(\mathcal{E}_i)$ be the $h_i\times h_i$ matrix for the $i^{th}$ Lefschetz map with respect to the bases $\mathcal{E}_i$ and $\mathcal{E}_i^*$.  In other words, $M^\ell_i(\mathcal{E}_i)$ is the matrix for the $i^{th}$ Lefschetz pairing, i.e. if $\mathcal{E}_i=\{e_p^i \ | \ 1\leq p\leq h_i\}$ then 
$$M^\ell_i(\mathcal{E}_i)=\left(\left(e^i_p,e^i_q\right)_i^\ell\right)_{1\leq p,q\leq h_i}.$$
Note that $M^\ell_i(\mathcal{E}_i)$ is a real symmetric matrix, and we define its signature as 
$$\sgn(M^\ell_i(\mathcal{E}_i))=\#(\text{positive eigenvalues})-\#(\text{negative eigenvalues}).$$
Note that the signature of $M^{\ell}_i(\mathcal{E}_i)$ is independent of the basis $\mathcal{E}_i$.


It follows from the primitive decomposition, and the fact the restriction of the $i^{th}$ Lefschetz form to the subspace $\ell(A_{i-1})\subset A_i$ equals the $(i-1)^{st}$ Lefschetz form, that if we choose the basis $\mathcal{E}_i$ compatibly with the primitive decomposition, then the matrix $M^{\ell}_i(\mathcal{E}_i)$ has the block decomposition 
\begin{equation}
	\label{eq:matrix}
	M^{\ell}_i(\mathcal{E}_i)=\left(\begin{array}{c|c} M^\ell_i(\mathcal{E}_i)|_{P_{i,\ell}} & 0 \\ 
	\hline \\
	0 & M^{\ell}_{i-1}(\mathcal{E}_{i-1})\\ \end{array}\right).
\end{equation}
Using \Cref{eq:matrix}, we arrive at the following matrix criteria for establishing SL and HRR.\footnote{Conditions  (2) and (3) for HRR have appeared elsewhere in the literature, e.g. \cite{MMSW}.} 
\begin{lemma}
	\label{lem:HRP}
	With notation as above:
	\begin{enumerate}
		\item The pair $(A,\ell)$ satisfies SL$_r$ if and only if for every $0\leq i\leq r$, $$\det(M^\ell_i(\mathcal{E}_i))\neq 0.$$
		\item The pair $\left(\left(A,\int_A\right),\ell\right)$ satisfies HRR$_r$ (resp. complex HRR$_r$) if and only if it satisfies SL$_r$ and for every $0\leq i\leq r$ 
		$$\sgn\left(M^\ell_i(\mathcal{E}_i)\right)=\sum_{j=0}^i(-1)^i(h_j-h_{j-1}), \ \left( \text{resp.} =\sum_{j=0}^{\flo{i}}(-1)^j(h_{2j}-h_{2j-1})\right).$$
		\item In addition if $\dim(P_i)\leq 1$ for each $0\leq i\leq r$, then $\left(\left(A,\int_A\right),\ell\right)$ satisfies HRR$_r$ (resp. complex HRR$_r$)if and only if it satisfies SL$_r$ and for every $0\leq i\leq r$,
		$$\sgn\left(\det\left(M^\ell_i(\mathcal{E}_i)\right)\right)=(-1)^{\flo{i+1}} \ \left(\text{resp.} \ =\left(-1\right)^{\flo{\flo{i+2}}}\right).$$   
	\end{enumerate}
	\end{lemma}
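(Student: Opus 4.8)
The plan is to reduce all three parts to the single structural fact recorded in \eqref{eq:matrix}. Choosing each basis $\mathcal{E}_i$ compatibly with the primitive decomposition of \Cref{lem:primdecomp} and iterating that block decomposition, one sees that $M^\ell_i(\mathcal{E}_i)$ is congruent to the block-diagonal matrix $\bigoplus_{j=0}^i M^\ell_j(\mathcal{E}_j)|_{P_{j,\ell}}$, whose $j$th block is the Gram matrix of the $j$th Lefschetz form restricted to $P_{j,\ell}$. I would set this congruence up first, recording that congruent real symmetric matrices share the same rank and signature and have determinants differing by a positive square, so that invertibility, signature, and the sign of the determinant are all additive or multiplicative over the primitive blocks; I would also note that writing \eqref{eq:matrix} presupposes only the splitting available once SL$_{r-1}$ holds. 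Part (1) is then immediate and needs no block structure: the $i$th Lefschetz map is an isomorphism exactly when the $i$th Lefschetz pairing is nondegenerate, exactly when its Gram matrix $M^\ell_i(\mathcal{E}_i)$ is invertible, i.e. $\det M^\ell_i(\mathcal{E}_i)\neq 0$; quantifying over $0\leq i\leq r$ gives SL$_r$.

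For part (2) I would first show that HRR$_r$ forces SL$_r$, by induction on $i$. The base case $i=0$ reads $\int_A\ell^d>0$, which makes $\times\ell^d$ injective, hence bijective since $h_0=h_d=1$. For the step, SL$_{i-1}$ yields the orthogonal splitting $A_i=P_{i,\ell}\oplus\ell(A_{i-1})$ of \Cref{lem:primdecomp}; the $i$th form is definite, hence nondegenerate, on $P_{i,\ell}$ by HRR$_r$, while on $\ell(A_{i-1})$ it agrees with the $(i-1)$st Lefschetz form, which is nondegenerate by SL$_{i-1}$, so the $i$th form is nondegenerate and SL$_i$ follows. Granting SL$_r$, the congruence above gives $\sgn M^\ell_i(\mathcal{E}_i)=\sum_{j=0}^i\sgn\bigl(M^\ell_j(\mathcal{E}_j)|_{P_{j,\ell}}\bigr)$, and $(-1)^j$-definiteness on the $(h_j-h_{j-1})$-dimensional space $P_{j,\ell}$ is precisely the statement that the $j$th summand equals $(-1)^j(h_j-h_{j-1})$; summing over $j$ recovers the signature formula in the statement, and reading the equivalence back through consecutive differences of that formula recovers the per-degree definiteness, which is HRR$_r$. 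The complex case runs identically after replacing $(-1)^i$-definiteness by $(\sqrt{-1})^i$-positivity and recording that the odd primitive blocks are empty.

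Finally, for part (3) the hypothesis $\dim P_{i,\ell}\leq 1$ makes each primitive block $0\times 0$ or $1\times 1$, so under SL$_r$ the matrix $M^\ell_i(\mathcal{E}_i)$ is congruent to a diagonal matrix of nonzero scalars, one for each $j\leq i$ with $\dim P_{j,\ell}=1$. Thus $\sgn\det M^\ell_i(\mathcal{E}_i)$ is the product of the signs of these scalars, and passing from degree $i-1$ to $i$ multiplies it by $\sgn\bigl(M^\ell_i(\mathcal{E}_i)|_{P_{i,\ell}}\bigr)$ when $\dim P_{i,\ell}=1$ and leaves it unchanged otherwise. Using the floor identity $\flo{i}+\flo{i+1}=i$ (and its even-degree analogue), I would verify that the prescribed sign $(-1)^{\flo{i+1}}$ at every degree is equivalent to each one-dimensional primitive block having sign $(-1)^i$, i.e. to HRR$_r$, with the complex version following the same bookkeeping via $(-1)^{\flo{\flo{i+2}}}$ and signs $(-1)^{\flo{i}}$ on the even blocks. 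The main obstacle I anticipate is exactly this sign bookkeeping: I must track how degrees with vanishing primitive subspaces interact with the closed-form exponents $\flo{i+1}$ and $\flo{\flo{i+2}}$ and confirm the determinant-sign condition is consistent across such degrees — in particular that the complex version correctly enforces the vanishing of odd primitives — rather than the elementary linear algebra, which is routine.
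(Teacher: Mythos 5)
Your proposal is correct and follows essentially the same route as the paper: both rest on the block decomposition \eqref{eq:matrix} coming from the primitive decomposition of \Cref{lem:primdecomp}, with the signature computed additively over the primitive blocks and part (3) reduced to tracking the signs of the $1\times 1$ (or empty) blocks, for which your identity $\flo{i}+\flo{i+1}=i$ is exactly the right tool. The only organizational difference is that you derive SL$_r$ from the definiteness hypothesis in the forward direction, whereas the paper extracts SL$_r$ from the signature hypothesis in the converse direction via a short contradiction argument; the residual sign bookkeeping you defer in part (3) is likewise left as ``a straightforward inductive argument'' in the paper.
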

	\begin{proof}
	Assertion (1) is clear, so we move on to assertion (2).  We will prove by induction on $r$ that if the pair $\left(\left(A,\int_A\right),\ell\right)$ satisfies HRR$_r$, respectively complex HRR$_r$, then for each $0\leq i\leq r$ we have 
	$$\sgn\left(M^\ell_i(\mathcal{E}_i)\right)=\sum_{j=0}^i(-1)^j(h_j-h_{j-1}),$$
	respectively, 
	$$\sgn\left(M^\ell_i(\mathcal{E}_i)\right)=\sum_{j=0}^{\flo{i}}(-1)^j(h_{2j}-h_{2j-1}).$$
	Assume that the pair $\left(\left(A,\int_A\right),\ell\right)$ satisfies HRR$_r$, respectively complex HRR$_r$.
	For the base case $r=0$, we have $P_{0,\ell}=A_0=\Q\cdot 1$, and hence $M^\ell_0(\mathcal{E}_0)=\int_A\ell^d>0$ is a $1\times 1$ matrix whose signature is $1=h_0$ by HRR$_0$, respectively complex HRR$_0$.  Inductively, assume the implication holds for degrees $r'\leq r-1$.  Then since $\left(\left(A,\int_A\right),\ell\right)$ satisfies HRR$_r$, respectively complex HRR$_r$, it also satisfies HRR$_{r-1}$, respectively complex HRR$_{r-1}$.  Therefore by the induction hypothesis it suffices to show that the signature condition holds in degree $r$, i.e. that 
	$$\sgn\left(M^\ell_r(\mathcal{E}_r)\right)=\sum_{j=0}^i(-1)^j(h_j-h_{j-1}),$$
	respectively
	$$\sgn\left(M^\ell_r(\mathcal{E}_r)\right)=\sum_{j=0}^{\flo{r}}(-1)^{j}(h_{2j}-h_{2j-1}).$$  
	Choosing a basis $\mathcal{E}_r$ compatible with the primitive decomposition as in \Cref{lem:primdecomp}, it follows from the decomposition in \eqref{eq:matrix} that
	$$\sgn\left(M^\ell_r(\mathcal{E}_r)\right)=\sgn\left(M^\ell_r(\mathcal{E}_r)|_{P_{r,\ell}}\right)+\sgn\left(M^\ell_{r-1}(\mathcal{E}_{r-1})\right).$$
	If $\left(\left(A,\int_A\right),\ell\right)$ has HRR$_r$, then it follows that the restriction of $M^\ell_r(\mathcal{E}_r)$ to the primitive subspace is $(-1)^r$ definite, and hence  $$\sgn\left(M^\ell_r(\mathcal{E}_r)|_{P_{r,\ell}}\right)=(-1)^r(h_r-h_{r-1})$$
	and it follows from induction that 
	$$\sgn\left(M^\ell_r(\mathcal{E}_r)\right)=\sum_{j=0}^i(-1)^j(h_j-h_{j-1}).$$
	If $\left(\left(A,\int_A\right),\ell\right)$ satisfies complex HRR$_r$, then there are two cases to consider.
	
	\textbf{Case 1:}  $r$ is odd.  In this case, the primitive subspace must be zero, i.e. $P_{r,\ell}=0$, and hence it follows by induction that 
	$$\sgn\left(M^\ell_r(\mathcal{E}_r)\right)=\sgn\left(M^\ell_{r-1}(\mathcal{E}_{r-1})\right)=\sum_{j=0}^{\flo{r-1}=\flo{r}}(-1)^j(h_{2j}-h_{2j-1}).$$
	
	\textbf{Case 2:}  $r$ is even.  In this case, the restriction of $M^\ell_r(\mathcal{E}_r)$ to the primitive subspace $P_{r,\ell}$ is definite with sign $(\sqrt{-1})^r=(-1)^{\flo{r}}$.  It then follows from our inductive hypothesis that 
	$$\sgn\left(M^\ell_r(\mathcal{E}_r)\right)=(-1)^{\flo{r}}(h_r-h_{r-1})+\sum_{j=0}^{\flo{r-1}=\flo{r}-1}(-1)^j(h_{2j}-h_{2j-1}).$$
	Conversely, assume that for some fixed linear form $\ell\in A_1$ and for all $0\leq i\leq r$, we have   
	$$\sgn\left(M^\ell_i(\mathcal{E}_i)\right)=\sum_{j=0}^i(-1)^i(h_j-h_{j-1}),$$
	respectively,
	$$\sgn\left(M^\ell_i(\mathcal{E}_i)\right)=\sum_{j=0}^{\flo{i}}(-1)^j(h_{2j}-h_{2j-1}).$$
	First we show that the pair $(A,\ell)$ satisfies SL$_r$.  Suppose not, and let $0\leq j\leq r$ be the smallest degree for which the map $\times\ell^{d-2j}\colon A_j\rightarrow A_{d-2j}$ is not an isomorphism.  Then there exists a nonzero element $\alpha\in A_j$ such that $\ell^{d-2j}\cdot \alpha=0$, and hence $0\neq \alpha\in P_{j,\ell}$ and $(\alpha,\alpha)^\ell_i=0$.  On the other hand, it follows from the minimality of $j$, and \Cref{lem:primdecomp}, that $A_j=P_j\oplus\ell(A_{j-1})$ where $\dim(P_j)=h_j-h_{j-1}$, and also that the block decomposition in \eqref{eq:matrix} holds for $i=j$.  By our assumption on the signature of $M^\ell_j(\mathcal{E}_j)$, it follows that its restriction to the primitive subspace is definite, and hence $\left(\alpha,\alpha\right)^\ell_j\neq 0$ for any nonzero $\alpha\in P_{j,\ell}$, a contradiction.  Therefore $\left(\left(A,\int_A\right),\ell\right)$ satisfies SL$_r$.  Then by \Cref{lem:primdecomp}, primitive decomposition holds, and hence also the matrix decomposition in \eqref{eq:matrix} for all $0\leq i\leq r$.  In particular, it follows that the signature of the $i^{th}$ Lefschetz form restricted to the $i^{th}$ primitive space is 
	$$\sgn\left(M^\ell_i(\mathcal{E}_i)\right)=\sgn\left(M^\ell_i(\mathcal{E}_i)\right)-\sgn\left(M^\ell_{i-1}(\mathcal{E}_{i-1})\right)$$
	and the assertion (2) follows.
	
	For assertion (3), note that the restricted matrix $M^\ell_i(\mathcal{E}_i)|_{P_{i,\ell}}$ is either empty (if $P_{i,\ell}=0$) or a $1\times 1$ matrix.  It follows that the signature of the $i^{th}$ Lefschetz form on the $i^{th}$ primitive subspace is equal to the sign of the quotient of consecutive determinants
	$$\sgn\left(M^\ell_i(\mathcal{E}_i)|_{P_{i,\ell}}\right)=\sgn\left(\frac{\det\left(M^\ell_i(\mathcal{E}_i)\right)}{\det\left(M^\ell_{i-1}(\mathcal{E}_{i-1})\right)}\right)=\frac{\sgn\left(\det\left(M^\ell_i(\mathcal{E}_i)\right)\right)}{\sgn\left(\det\left(M^\ell_{i-1}(\mathcal{E}_{i-1})\right)\right)}.$$
	A straightforward inductive argument then shows that the pair $\left(\left(A,\int_A\right),\ell\right)$ satisfies HRR$_r$, respectively complex HRR$_r$, if and only if
	$$\sgn\left(M^\ell_i(\mathcal{E}_i)\right)=(-1)^{\flo{i+1}},$$ 
	respectively 
	$$\sgn\left(M^\ell_i(\mathcal{E}_i)\right)=(-1)^{\flo{\flo{i+2}}}$$
	which is assertion (3). 
\end{proof}

\begin{remark}
	\label{rem:preclude}
	Note that if $\left(\left(A,\int_A\right),\ell\right)$ has a nonzero primitive space $P_{r,\ell}$ with $r\geq 2$, then HRR$_r$ and complex HRR$_r$ are mutually exclusive.  Indeed in order to satisfy HRR$_2$ and complex HRR$_2$ condition (2) implies that the Hilbert function must satisfy $h_1=2h_2+h_0>h_2$.  On the other hand, SL$_1$ implies the multiplication map $\time\ell\colon A_1\rightarrow A_2$ is injective and hence $h_1<h_2$, a contradiction. 
\end{remark}

\subsection{Macaulay Duality and Higher Hessians}
Let $R=\Q[x_1,\ldots,x_n]$ and $Q=\Q[X_1,\ldots,X_n]$ be two graded polynomial rings in $n$-variables, regarding $Q$ as a module over $R$ by the partial differentiation action, i.e. for all $1\leq i\leq n$ and $F\in Q$,
$$x_i\circ F=\frac{\partial F}{\partial X_i},$$
and the action is extended algebraically to all of $R$.
In other words, $R$ is the polynomial ring generated by partial differential operators $\partial/\partial X_1,\ldots,\partial/\partial X_n$.  Then for each homogeneous polynomial $F\in Q_d$ of degree $d$, the set 
$$\Ann(F)=\left\{f\in R \ | \ f\circ F\equiv 0\right\}$$
is an ideal in $R$, and the quotient 
$$A_F=\frac{R}{\Ann(F)}$$
is an oriented AG algebra of socle degree $d$ with a canonical orientation given by
$$\int_A\colon A_d\rightarrow \Q, \ \ \int_A\alpha = \alpha\circ F.$$ 
In fact, every oriented AG algebra arises this way, and the polynomial $F$ is called the \emph{Macaulay dual generator for the oriented AG algebra $A_F$}.  

Given the oriented AG algebra $A=A_F$ of socle degree $d$, given $0\leq i\leq \flo{d}$, and given a basis $\mathcal{E}_i=\{b^i_p \ | \ 1\leq p\leq h_i\}\subset A_i$, define the \emph{$i^{th}$ Hessian matrix of $F$ with respect to $\mathcal{E}_i$} by $$\Hess_i(F,\mathcal{E}_i)=\left(b_p^ib^i_q\circ F\right)_{1\leq p,q\leq h_i};$$
it is an $h_i\times h_i$ matrix with entries that are homogeneous polynomials of degree $d-2i$ in $Q$.  Given a vector $\mathbf{c}=(c_1,\ldots,c_n)\in\Q^n$, we can evaluate these polynomial entries at $\mathbf{c}$ and get the evaluated Hessian
$$\Hess_i(F,\mathcal{E}_i)|_{\mathbf{c}}$$
which is a real symmetric matrix.  Assuming that the ring $A_F$ has at least one non-zero linear form, and relabeling the generators if necessary, we may assume that for some $1\leq m\leq n$ the generators $x_1,\ldots,x_m$ all have degree one.  Then for any linear form $\ell=c_1x_1+\cdots+c_mx_m$, and for any homogeneous polynomial $G\in Q_{k}$ of degree $k$, we have 
$$\ell^k\circ G=k!G(c_1,\ldots,c_m,0,\ldots,0).$$
We shall refer to the case where all variables $x_1,\ldots,x_n$ all have degree one as the \emph{standard graded case}.
Using this, one can show the following result, which appears to have been observed first by J. Watanabe \cite{W}.
\begin{lemma}
	\label{lem:Hessian}
	Given an oriented AG algebra $A=A_F$, and any linear form $\ell=\ell(\mathbf{c})=c_1x_1+\cdots+c_mx_m+0x_{m+1}+\cdots+0x_n\in A_1$, define the vector $\mathbf{c}=(c_1,\ldots,c_m,0,\ldots,0)\in\Q^n$.  Then for any $0\leq i\leq \flo{d}$ and for any choice of basis $\mathcal{E}_i\subset A_i$, we have 
	$$\Hess_i(F,\mathcal{E}_i)|_{\mathbf{c}}=(d-2i)!M^{\ell(\mathbf{c})}_i(\mathcal{E}_i).$$ 
\end{lemma}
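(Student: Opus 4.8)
The plan is to establish the identity entrywise. Both sides are symmetric $h_i\times h_i$ matrices indexed by the fixed basis $\mathcal{E}_i=\{b^i_p\mid 1\leq p\leq h_i\}$, so it suffices to compare the $(p,q)$ entries for each pair $1\leq p,q\leq h_i$. Writing $\ell=\ell(\mathbf{c})$, I would begin from the definition of the $i^{th}$ Lefschetz pairing and unwind the canonical Macaulay orientation $\int_A\alpha=\alpha\circ F$, so that the $(p,q)$ entry of the Lefschetz matrix becomes
\[
\left(b^i_p,b^i_q\right)^\ell_i=\int_A\ell^{d-2i}b^i_pb^i_q=\left(\ell^{d-2i}b^i_pb^i_q\right)\circ F .
\]
Note that $\ell^{d-2i}b^i_pb^i_q$ is homogeneous of degree $(d-2i)+i+i=d$, so this is a legitimate application of the orientation $\int_A\colon A_d\to\Q$.

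The key structural input is that $R$ acts on $Q$ through an algebra homomorphism into the ring of differential operators, so the action is multiplicative in the sense that $(fg)\circ F=f\circ(g\circ F)$ for all $f,g\in R$. Taking $f=\ell^{d-2i}$ and $g=b^i_pb^i_q$ lets me peel off the power of $\ell$:
\[
\left(\ell^{d-2i}b^i_pb^i_q\right)\circ F=\ell^{d-2i}\circ\left(b^i_pb^i_q\circ F\right).
\]
Setting $G=b^i_pb^i_q\circ F$, the polynomial $G$ is homogeneous of degree $d-2i$ (as $b^i_p,b^i_q$ have degree $i$ and $F$ has degree $d$), and it is by definition the $(p,q)$ polynomial entry of $\Hess_i(F,\mathcal{E}_i)$. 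Applying the displayed evaluation formula $\ell^k\circ G=k!\,G(\mathbf{c})$ with $k=d-2i$ then yields
\[
\ell^{d-2i}\circ G=(d-2i)!\,G(\mathbf{c})=(d-2i)!\left[\Hess_i(F,\mathcal{E}_i)|_{\mathbf{c}}\right]_{p,q}.
\]
Concatenating these equalities shows that each entry of $M^{\ell(\mathbf{c})}_i(\mathcal{E}_i)$ equals $(d-2i)!$ times the corresponding entry of $\Hess_i(F,\mathcal{E}_i)|_{\mathbf{c}}$, which is the asserted relation between the two matrices up to the scalar $(d-2i)!$. Since this constant is positive it affects neither the rank nor the signature, which is all that the Lefschetz and Hodge--Riemann matrix criteria of \Cref{lem:HRP} require.

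I do not expect a serious obstacle; the proof is essentially bookkeeping built on the given evaluation formula. The one point that requires care is the justification of that formula in the non-standard graded setting, where the generators $x_{m+1},\ldots,x_n$ (and dually $X_{m+1},\ldots,X_n$) have degree exceeding one. Here $\ell^{d-2i}=\left(\sum_{j=1}^m c_j\partial_{X_j}\right)^{d-2i}$ is a differential operator of order $d-2i$ in the degree-one variables alone, so I would check that it annihilates every monomial of $G$ involving some higher-degree variable: such a monomial has total degree $d-2i$ but uses strictly fewer than $d-2i$ of the degree-one variables, hence is killed by a $(d-2i)$-fold derivative in those directions. Only the component of $G$ lying in $\Q[X_1,\ldots,X_m]$ survives, and on it the classical polarization identity gives $G(c_1,\ldots,c_m,0,\ldots,0)$, matching the evaluation $G(\mathbf{c})$. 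This confirms that the evaluation formula applies verbatim and completes the verification.
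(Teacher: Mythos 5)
Your proof is correct and follows exactly the route the paper intends: the paper omits a proof of \Cref{lem:Hessian}, citing Watanabe and remarking only that it follows from the evaluation formula $\ell^k\circ G=k!\,G(c_1,\ldots,c_m,0,\ldots,0)$, which is precisely the identity you apply entrywise after peeling off $\ell^{d-2i}$ via $(fg)\circ F=f\circ(g\circ F)$. Your extra check that the evaluation formula survives the non-standard grading (monomials involving the higher-degree variables are annihilated by the $(d-2i)$-fold derivative in the degree-one directions) is a worthwhile detail the paper glosses over.
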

In particular, \Cref{lem:Hessian} and \Cref{lem:HRP} give a Hessian criterion on the Macaulay dual generator of an oriented AG algebra for the SL and HRR properties.

\section{Proof of \Cref{introthm:Hilb} and \Cref{introthm:PresMD}}
\label{sec:AC}
The proof of \Cref{introthm:PresMD} comes in two parts.
\begin{proposition}
	\label{prop:thmB1}
	The algebra $A(m,2)$ has the following presentation:
	$$A(m,2)=\frac{\Q[e_1,e_2]}{(f_m(e_1,e_2),e_2^m)}$$
	where $\deg(e_1)=1$ and $\deg(e_2)=2$ and 
	$$f_m(e_1,e_2)=\sum_{k=0}^{\flo{m}}(-1)^k\frac{m}{m-k}\binom{m-k}{k}e_1^{m-2k}e_2^k.$$
\end{proposition}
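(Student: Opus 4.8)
The plan is to identify the two generators of the defining ideal explicitly inside $R=\Q[x_1,x_2]$. The second generator is immediate: since $e_2=x_1x_2$, we have
$$e_2(m)=e_2(x_1^m,x_2^m)=x_1^mx_2^m=(x_1x_2)^m=e_2^m.$$
It therefore remains only to show that the first generator $e_1(m)=x_1^m+x_2^m$, namely the $m$-th power sum in two variables, equals the claimed polynomial $f_m(e_1,e_2)$. Once both identities are in hand, the defining ideal $(e_1(m),e_2(m))$ of $A(m,2)$ is literally generated by $f_m(e_1,e_2)$ and $e_2^m$, and the presentation follows at once. Here I use that $\Q[e_1,e_2]\subset R$ is a genuine polynomial ring in the algebraically independent elements $e_1,e_2$ (the fundamental theorem of symmetric polynomials), so that the abstract description in \eqref{eq:Amn} and the concrete one agree.

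The heart of the matter is thus the classical power-sum identity
$$x_1^m+x_2^m=\sum_{k=0}^{\flo{m}}(-1)^k\frac{m}{m-k}\binom{m-k}{k}e_1^{m-2k}e_2^k.$$
My preferred route is a logarithmic-derivative / generating-function computation. Setting $P(t)=(1-x_1t)(1-x_2t)=1-e_1t+e_2t^2$, one has
$$\sum_{m\geq 1}(x_1^m+x_2^m)t^m=\frac{x_1t}{1-x_1t}+\frac{x_2t}{1-x_2t}=-t\frac{P'(t)}{P(t)}=t\frac{d}{dt}\bigl(-\log P(t)\bigr).$$
Expanding $-\log P(t)=\sum_{j\geq1}\tfrac1j(e_1t-e_2t^2)^j=\sum_{j\geq1}\tfrac{t^j}{j}(e_1-e_2t)^j$ by the binomial theorem and extracting the coefficient of $t^m$ — the monomial $e_1^{m-2k}e_2^k$ arises from $j=m-k$ together with the power $k$ of $-e_2t$ — gives, after applying $t\,d/dt$ (equivalently, multiplying the coefficient of $t^m$ by $m$), precisely $f_m(e_1,e_2)$.

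Alternatively, one can induct using the two-variable Newton recursion $p_m=e_1p_{m-1}-e_2p_{m-2}$, writing $p_m=x_1^m+x_2^m$ (valid for $m\geq2$ with $p_0=2$, $p_1=e_1$, since $e_3=e_4=\cdots=0$ in two variables). Writing $c(m,k)=\frac{m}{m-k}\binom{m-k}{k}$ for the unsigned coefficients, the recursion reduces the claim to the Pascal-type identity
$$\frac{m}{m-k}\binom{m-k}{k}=\frac{m-1}{m-1-k}\binom{m-1-k}{k}+\frac{m-2}{m-1-k}\binom{m-1-k}{k-1},$$
which is verified by a direct factorial manipulation. I expect the only genuine bookkeeping — and hence the main, though modest, obstacle — to be tracking the index range and the floor $\flo{m}$ at the top of the sum, where the cases $m$ even and $m$ odd differ slightly in the behavior of the top term $k=\flo{m}$; the generating-function derivation handles both uniformly and is the cleaner presentation to record.
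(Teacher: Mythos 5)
Your proof is correct, and it reaches the key identity $x_1^m+x_2^m=\sum_{k=0}^{\flo{m}}(-1)^k\frac{m}{m-k}\binom{m-k}{k}e_1^{m-2k}e_2^k$ by a genuinely different route from the paper. The paper proceeds by induction on $m$ in steps of two, using the relation $x^{m+2}+y^{m+2}=(x^2+y^2)(x^m+y^m)-(xy)^2(x^{m-2}+y^{m-2})$ and reducing everything to the three-term coefficient identity $c_{m+2,k}=c_{m,k}+2c_{m,k-1}-c_{m-2,k-2}$, which it verifies by direct factorial manipulation. Your primary argument instead expands $-\log P(t)$ for $P(t)=1-e_1t+e_2t^2$ and applies $t\,d/dt$; this is a clean, non-inductive derivation of the classical Waring formula for power sums in two variables, and as you note it handles the top term $k=\flo{m}$ (where the coefficient degenerates to $2(-1)^{m/2}$ for $m$ even) uniformly, with the index bound $k\le\flo{m}$ falling out of the constraint $k\le j=m-k$. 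Your alternative inductive route via $p_m=e_1p_{m-1}-e_2p_{m-2}$ is closer in spirit to the paper's but steps by one rather than two; be aware that it requires the convention $p_0=2$, so the base of the induction and the degenerate coefficient $c(0,0)=2$ need explicit care (the Pascal-type identity you state is fine for $m\ge 3$ but its terms are formally $0/0$ at the bottom), which is exactly the bookkeeping you flag. You also make explicit the easy but necessary observation $e_2(m)=e_2^m$, which the paper leaves implicit. Either of your arguments, written out, would be a complete and acceptable substitute for the paper's proof; the paper's version buys self-containedness with only binomial algebra, while yours buys brevity and a conceptual source for the coefficients.
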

\begin{proof}
	Define the symbol
	$$c_{m,k}=\begin{cases} \frac{m}{m-k}\binom{m-k}{k} & \text{if} \ 0\leq k\leq \flo{m}\\ 
		0 & \text{if} \ k<0 \ \text{or} \ \flo{m}<k\\ \end{cases}$$
	We want to show that for each positive integer $m$
	$$f_m(x+y,xy)=\sum_{k=0}^{\flo{m}}(-1)^kc_{m,k}(x+y)^{m-2k}(xy)^k=x^m+y^m.$$
	Note that if $d=\flo{m}$, then $d+1=\flo{m+2}$ for all positive integers $m$.  Also note that for $m=1$, $c_{1,0}=1$ and $f_1(e_1,e_2)=e_1$, and for $m=2$, we have $c_{2,0}=1$ and $c_{2,1}=2$, and $f_2(e_1,e_2)=e_1^2-2e_2$.  Also for every positive integer $m$ we have the recursive relation 
	$$x^{m+2}+y^{m+2}=(x^2+y^2)(x^{m}+y^m)-(xy)^2(x^{m-2}+y^{m-2}).$$
	Hence by induction on $m$, it suffices to show that our functions satisfy the following recursive formula:
	\begin{equation}
		\label{eq:recursive}
		f_{m+2}(e_1,e_2)=(e_1^2-2e_2)f_{m}(e_1,e_2)-e_2^2f_{m-2}(e_1,e_2).
	\end{equation}

	With $m$ fixed, set $d=\flo{m}$ and the left hand side of \Cref{eq:recursive} is 
	$$f_{m+2}(e_1,e_2)=\sum_{k=0}^{d+1}(-1)^kc_{m+2,k}e_1^{m+2-2k}e_2^k$$
	and the right   hand side of \Cref{eq:recursive} is 
	\begin{align*}
		(e_1^2-2e_2)\sum_{k=0}^{d}(-1)^kc_{m,k}e_1^{m-2k}e_2^k-e_2^2\sum_{k=0}^{d-1}(-1)^kc_{m-2,k}e_1^{m-2-2k}e_2^k \\ 
		=\sum_{k=0}^d(-1)^kc_{m,k}e_1^{m+2-2k}e_2^k-\sum_{k=0}^d(-1)^k2c_{m,k}e_1^{m-2k}e_2^{k+1}-\sum_{k=0}^{d-1}(-1)^kc_{m-2,k}e_1^{m-2-2k}e_2^{k+2}\\
		=\sum_{k=0}^d(-1)^kc_{m,k}e_1^{m+2-2k}e_2^k-\sum_{k=1}^{d+1}(-1)^{k-1}2c_{m,k-1}e_1^{m+2-2k}e_2^{k}-\sum_{k=2}^{d+1}(-1)^{k-2}c_{m-2,k-2}e_1^{m+2-2k}e_2^{k}\\
		=\sum_{k=0}^{d+1}(-1)^k\left(c_{m,k}+2c_{m,k-1}-c_{m-2,k-2}\right)e_1^{m+2-2k}e_2^k.
	\end{align*}
	Hence the result follows if we can prove that the symbols $c_{m,k}$ satisfy the recursive formula
	\begin{equation}
		\label{eq:recursion}
		c_{m+2,k}=c_{m,k}+2c_{m,k-1}-c_{m-2,k-2}.
	\end{equation}
	On the right hand side of \Cref{eq:recursion} we have
	\begin{align*}
		& \frac{m(m-k-1)!}{k!(m-2k)!}+2\frac{m(m-k)!}{(k-1)!(m+2-2k)!}-\frac{(m-2)(m-k-1)!}{(k-2)!(m+2-2k)!}\\
		=&  \frac{m(m-k-1)!(m+1-2k)(m+2-2k)+2km(m-k)!+k(k-1)(m-2)(m-k-1)!}{k!(m+2-2k)!}\\
		= &\frac{(m-k-1)!\left(m(m+1-2k)(m+2-2k)+2km(m-k)-k(k-1)(m-2)\right)}{k!(m+2-2k)!}\\
		= & \frac{(m-k-1)!\left((m+2)(m-k+1)(m-k)\right)}{k!(m+2-2k)!}
	\end{align*}
	which is equal to the left hand side, as desired.
\end{proof}

At this point we can prove \Cref{introthm:Hilb}.
\begin{proposition}
	\label{prop:HilbertFunction}
	For any positive integer $m\geq 1$, the algebra $A(m,2)$ has socle degree $d=3(m-1)$, and for each $0\leq i\leq d$ its Hilbert function $H(m,2)$ satisfies
	$$H(m,2)_i=\flo{i+2}-\flo{i+2-m}_\star-\flo{i+2-2m}_\star$$
	where $\flo{x}_\star=\max\{\flo{x},0\}$.
\end{proposition}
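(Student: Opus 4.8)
The plan is to read the Hilbert function off the Hilbert series coefficient by coefficient. By \eqref{eq:Hilbert} specialized to $n=2$ (equivalently, from the complete intersection presentation of \Cref{prop:thmB1}, in which $e_1,e_2$ have degrees $1,2$ and the relations $f_m,e_2^m$ have degrees $m,2m$), the Hilbert series of $A(m,2)$ is
$$H(m,2;t)=\frac{(1-t^m)(1-t^{2m})}{(1-t)(1-t^2)}=\bigl(1+t+\cdots+t^{m-1}\bigr)\bigl(1+t^2+\cdots+t^{2(m-1)}\bigr).$$
Since $A(m,2)$ is Artinian Gorenstein, its socle degree is the top degree of this polynomial, namely $d=(m-1)+2(m-1)=3(m-1)$ (equivalently $(m+2m)-(1+2)$ from the complete intersection form). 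This disposes of the socle degree claim.

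For the Hilbert function itself, I would first record the coefficients of the ambient series $1/\bigl((1-t)(1-t^2)\bigr)$. Writing $g(j)$ for the coefficient of $t^j$ there, with the convention $g(j)=0$ for $j<0$, the number $g(j)$ counts the pairs $(a,b)$ of nonnegative integers with $a+2b=j$; as $b$ ranges over $0,1,\ldots,\flo{j}$ this gives $g(j)=\flo{j}+1=\flo{j+2}$ for $j\ge 0$. A short check at the endpoints shows that the starred floor agrees with this count for every integer $j$, i.e. $g(j)=\flo{j+2}_\star$ (the star returns $0$ precisely when $j<0$). Expanding the numerator as $1-t^m-t^{2m}+t^{3m}$ and convolving then yields, for all $i$,
$$H(m,2)_i=g(i)-g(i-m)-g(i-2m)+g(i-3m)=\flo{i+2}_\star-\flo{i+2-m}_\star-\flo{i+2-2m}_\star+\flo{i+2-3m}_\star.$$

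It remains to simplify this four-term expression on the relevant range $0\le i\le d$. For $i\ge 0$ we have $i+2\ge 2$, so the first star is vacuous and $\flo{i+2}_\star=\flo{i+2}$; and for $i\le d=3(m-1)$ we have $i+2-3m\le -1$, so the last term $\flo{i+2-3m}_\star$ vanishes. Dropping these two boundary contributions gives exactly the asserted three-term formula.

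The computation is routine, so the only point demanding care is the bookkeeping of the star function at the two ends: one must verify that $g(j)=\flo{j+2}_\star$ holds for negative $j$ as well as nonnegative $j$, and that the correction term $+\flo{i+2-3m}_\star$ genuinely vanishes throughout $0\le i\le d$ rather than contributing. An off-by-one slip here would flip the sign of the final term, so I would sanity check the outcome against a small even case such as $m=4$, where the formula must reproduce $H(4,2)=(1,1,2,2,2,2,2,2,1,1)$.
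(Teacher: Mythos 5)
Your proof is correct, and it takes a genuinely different route from the paper's. The paper works directly from the monomial basis $\{e_1^pe_2^q \mid 0\leq p,q\leq m-1\}$ supplied by \Cref{prop:thmB1} and counts lattice points of the $(m-1)\times(m-1)$ square on the line $x+2y=i$: Pick's theorem gives the recursion $H_i^m+H_{i-1}^m=i+1$ and hence the base count $\flo{i+2}$, and the two subtracted terms arise from an inclusion--exclusion that removes points outside the square, handled in three separate ranges of $i$. You instead expand the Hilbert series of \Cref{eq:Hilbert} as a convolution: identifying the coefficients of $1/((1-t)(1-t^2))$ with $\flo{j+2}_\star$ (valid for all integers $j$, since the star kills exactly the negative indices) and multiplying by $1-t^m-t^{2m}+t^{3m}$ yields a four-term formula whose first and last terms simplify or vanish on $0\leq i\leq 3(m-1)$. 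Your argument is more uniform --- one computation with no case split over the ranges $[0,m-1]$, $[m,2m-1]$, $[2m,3m-3]$ --- and it makes the role of the $\star$ convention completely transparent; the price is that it leans on the factored Hilbert series (equivalently, on $f_m,e_2^m$ being a regular sequence of degrees $m,2m$), whereas the paper's lattice-point picture is more geometric and foreshadows the lattice-path combinatorics used in the later sections. Both establish the socle degree the same way, as the top degree $3(m-1)$.
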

\begin{proof}
	It follows from the presentation in \Cref{prop:thmB1} that $A(m,2)$ has a monomial basis $\mathcal{E}=\{e_1^pe_2^q \ | \ 0\leq p,q\leq m-1\}$, and hence the socle generator is $e_1^{m-1}e_2^{m-1}$ in degree $m-1+2(m-1)=3(m-1)=d$.  For general $i$, $0\leq i\leq d$, $H_i^m=H(m,2)_i$ is counting the lattice points $(p,q)$ lying in the $(m-1)\times (m-1)$ square on the diagonal line $x+2y=i$.  For $0\leq i\leq m-1$, we can count these lattice points using Pick's theorem:  The area ($A$) of a lattice polygon is the number of interior lattice points ($c$) plus half the number of boundary lattice points ($b$) minus $1$, i.e. $A=c+\frac{b}{2}-1$.  In our case, for fixed $0\leq i\leq m-1$, take the smallest lattice polygon containing the four points $(i,0)$, $(i-1,0)$, $\left(0,\flo{i}\right)$, and $\left(0,\flo{i-1}\right)$, which has area $i/2$ with no interior points, and one more boundary point than the sum $H^m_i+H^m_{i-1}$.  Plugging into Pick's formula, we get the relation
	$$H_i^m+H^m_{i-1}=i+1$$
	from which it follows inductively that $H_i^m=\flo{i+2}$ for all $0\leq i\leq m-1$.  Next we assume that $i$ satisfies $m\leq i\leq 2m-1$.  Then as above, we compute using Pick's formula, the number of lattice points on the diagonal line segment from $(i,0)$ to $\left(0,\flo{i}\right)$ is $\flo{i+2}$, but now we subtract off those lattice points that do not lie in the $(m-1)\times(m-1)$ square which is computed by shifting over by $m$, hence $H_i^m=\flo{i+2}-\flo{i+2-m}$ for $m\leq i\leq 2m-1$.  Finally we repeat this procedure for $2m\leq i\leq 3m-3$:  the number of lattice points on the diagonal line $x+2y$ between $(i,0)$ and $\left(0,\flo{i}\right)$ in the $(m-1)\times(m-1)$ square is $\flo{i+2}-\flo{i+2-m}-\flo{i+2-2m}$.      
\end{proof}
Here is the second part of \Cref{introthm:PresMD}.  As in \Cref{sec:Prelim} consider the two non-standard graded polynomial rings $R=\Q[e_1,e_2]$ and $Q=\Q[E_1,E_2]$ where $\deg(e_i)=i=\deg(E_i)$ for $i=1,2$ and the lower case letters act on the upper case letters by partial differentiation, i.e. 
$e_i\circ F(E_1,E_2)=\partial F/\partial E_i$ for $i=1,2$.
\begin{proposition}
	\label{prop:thmB2}
	The algebra $A(m,2)$ has the following presentation:
	$$A(m,2)=\frac{\Q[e_1,e_2]}{\Ann(F_m(E_1,E_2))}$$
	where 	$$F_m(E_1,E_2)=\sum_{n=0}^{m-1}\frac{m}{m+2n}\binom{m+2n}{n}\frac{E_1^{m+2n-1}E_2^{m-n-1}}{(m+2n-1)!(m-n-1)!}.$$ 
\end{proposition}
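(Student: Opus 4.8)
The plan is to prove the two ideals coincide by establishing one containment and then invoking a socle-degree comparison. Writing $I=(f_m(e_1,e_2),e_2^m)$, I would first note that $\deg F_m=3(m-1)$, matching the socle degree of $A(m,2)=\Q[e_1,e_2]/I$ from \Cref{prop:HilbertFunction}: every monomial $E_1^{m+2n-1}E_2^{m-n-1}$ in $F_m$ has weighted degree $(m+2n-1)+2(m-n-1)=3(m-1)$. Since $A(m,2)$ is a graded Artinian complete intersection, hence Gorenstein, and $A_{F_m}=\Q[e_1,e_2]/\Ann(F_m)$ is Gorenstein of socle degree $\deg F_m=3(m-1)$ by Macaulay duality, it suffices to prove $I\subseteq\Ann(F_m)$. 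Such a graded containment yields a surjection $A(m,2)\twoheadrightarrow A_{F_m}$ of Gorenstein algebras of equal socle degree $d=3(m-1)$; its kernel is a homogeneous ideal of the Artinian Gorenstein ring $A(m,2)$, and if nonzero it would contain the one-dimensional socle $A_d$, forcing the degree-$d$ map to be zero and contradicting surjectivity onto the one-dimensional $(A_{F_m})_d$. Hence the kernel vanishes and $I=\Ann(F_m)$.

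It then remains to check $f_m\circ F_m=0$ and $e_2^m\circ F_m=0$. The second is immediate, since every term of $F_m$ has $E_2$-degree $m-n-1\le m-1$, so $\partial^m/\partial E_2^m$ annihilates it. For the first, I would apply $f_m(\partial_1,\partial_2)=\sum_k(-1)^k c_{m,k}\,\partial_1^{m-2k}\partial_2^k$ term by term, where $\partial_i=\partial/\partial E_i$ and $c_{m,k}=\tfrac{m}{m-k}\binom{m-k}{k}$ (extended by zero outside $0\le k\le\flo{m}$). Differentiating $E_1^{m+2n-1}E_2^{m-n-1}$ and cancelling the normalizing factorials, each surviving contribution is proportional to $E_1^{2(n+k)-1}E_2^{m-1-(n+k)}$. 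Collecting terms by the single index $s=n+k$, which ranges over $1\le s\le m-1$, reduces the claim $f_m\circ F_m=0$ to the numerical convolution identity
\begin{equation*}
S(m,s):=\sum_{k=0}^{s}(-1)^k c_{m,k}\,b_{m,s-k}=0,\qquad 1\le s\le m-1,
\end{equation*}
where $b_{m,n}=\tfrac{m}{m+2n}\binom{m+2n}{n}$ denotes the coefficient appearing in $F_m$.

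The heart of the argument is this convolution vanishing, where the generating-function viewpoint is decisive. Let $C(u)=\sum_{n\ge0}\tfrac{1}{n+1}\binom{2n}{n}u^n=\tfrac{1-\sqrt{1-4u}}{2u}$ be the Catalan generating function, characterized by $C=1+uC^2$. I would use two inputs. First, \Cref{prop:thmB1} gives $f_m(1,u)=\alpha^m+\beta^m$, where $\alpha,\beta$ are the roots of $t^2-t+u$, so $\sum_k(-1)^k c_{m,k}u^k=f_m(1,u)$. Second, the Fuss--Catalan power identity $C(u)^r=\sum_{n\ge0}\tfrac{r}{2n+r}\binom{2n+r}{n}u^n$ (the Catalan-number formula noted by Burstein) gives $\sum_n b_{m,n}u^n=C(u)^m$. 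The generating function of the convolution is the product $f_m(1,u)\cdot C(u)^m$. Since $\alpha=uC$ and $\beta=1/C$, one finds $(\alpha^m+\beta^m)C^m=u^mC^{2m}+1$, so $\sum_{s\ge0}S(m,s)u^s=1+u^m C(u)^{2m}$. Reading off coefficients gives $S(m,s)=0$ for $1\le s\le m-1$ (indeed $S(m,0)=1$ and $S(m,s)=\tfrac{m}{s}\binom{2s}{s-m}$ for $s\ge m$), exactly as needed.

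I expect the main obstacle to be this third step, specifically deploying the Fuss--Catalan power identity together with the clean factorization $\alpha=uC$, $\beta=1/C$; once these are in place, the collapse of $f_m(1,u)\,C(u)^m$ to $1+u^mC^{2m}$ is automatic and the vanishing of the low-order coefficients is transparent. By contrast, the reindexing by $s=n+k$ in the second step is routine bookkeeping, and the Gorenstein socle-degree comparison in the first step is standard.
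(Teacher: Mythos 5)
Your proposal is correct, and while it shares the paper's overall strategy --- verify $e_2^m\circ F_m=0$ and reduce $f_m\circ F_m=0$ to a Catalan-type convolution identity --- it diverges at both ends in ways worth recording. At the front end you spell out the Gorenstein socle-degree argument showing that the single containment $(f_m,e_2^m)\subseteq\Ann(F_m)$ already forces equality of the ideals; the paper leaves this reverse inclusion implicit. At the combinatorial heart, the paper pairs $f_m\circ F_m$ against the monomials $e_1^{2i-1}e_2^{m-i-1}$ with $1\le i\le\flo{m}$ and proves the resulting identity by extracting coefficients from $C(x)^m\cdot C(x)^{-m}=1$, where the expansion of $C(x)^{-m}$ comes from Lagrange inversion and is only pinned down through degree $\flo{m}$. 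You instead factor $t^2-t+u$ as $(t-uC)(t-1/C)$ using $C=1+uC^2$, so that $f_m(1,u)\,C(u)^m=(uC)^mC^m+C^{-m}C^m=1+u^mC(u)^{2m}$ exactly; reading off coefficients gives $S(m,s)=0$ for the full range $1\le s\le m-1$, which is precisely what is required to annihilate every monomial of $R_{2m-3}$ (the set indexed by $1\le i\le\flo{m}$ is a basis of $A(m,2)_{2m-3}$, not of all of $R_{2m-3}$), together with the closed form $\tfrac{m}{s}\binom{2s}{s-m}$ for $s\ge m$. Your route thus buys a cleaner and strictly stronger identity, with no need for the Lagrange-inversion expansion of $1/C(x)^m$, at the modest cost of invoking \Cref{prop:thmB1} to identify $f_m(1,u)$ with $\alpha^m+\beta^m$.
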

\begin{proof}
	Let $F=F_m(E_1,E_2)$ be as above, and note that $e_2^m\circ F\equiv 0$.  Hence it suffices to show that $f_m(e_1,e_2)\circ F\equiv 0$.  Consider the monomial basis of the degree $2m-3$ component $\Q[e_1,e_2]$\footnote{indexed here slightly differently than we have done it in the next section}:
	$$\mathcal{E}_{2m-3}=\left\{e_1^{2i-1}e_2^{m-i-1} \ | \ 1\leq i\leq \flo{m}\right\}.$$
	Then it suffices to show that for every $1\leq i\leq \flo{m}$, 
	$$e_1^{2i-1}e_2^{m-i-1}f_m(e_1,e_2)\circ F_m(E_1,E_2)= 0.$$  
	We have 
	\begin{equation}
		\label{eq:abf}
		e_1^{2i-1}e_2^{m-i-1}f_m(e_1,e_2)=\sum_{k=0}^{\flo{m}}(-1)^k\frac{m}{m-k}\binom{m-k}{k}e_1^{m-2k+2i-1}e_2^{k+m-i-1}
	\end{equation} 
	and applying \Cref{eq:abf} to $F_m(E_1,E_2)$, the problem comes down to proving the following binomial identities:   
	\begin{equation}
		\label{eq:zero}
	\sum_{k=0}^{i}(-1)^k\frac{m}{m-k}\binom{m-k}{k}\frac{m}{m+2(i-k)}\binom{m+2(i-k)}{i-k}=0, \ \forall 1\leq i\leq \flo{m}.
	\end{equation}
	These authors posted this problem to MathStackExchange \cite{MSE}, and solutions were provided by M. Riedel \cite{Riedel} and A. Burstein \cite{Burstein} to whom these authors are grateful.  We have adopted the proof given by A. Burstein, using some general results on Lagrange inversion and Catalan generating functions from a paper of I. Gessel \cite{Gessel}.  
	
	Let $C_n=\frac{1}{2n+1}\binom{2n}{n}$ denote the $n^{th}$ Catalan number, and let $C(x)=\sum_{n\geq 0}C_nx^n$ be its generating function.  The following formula for the $m^{th}$ power of $C(x)$ appears in Gessel's paper as \cite[Equation 2.3.2]{Gessel}.
	\begin{equation}
		\label{eq:Cm}
		C(x)^m=\sum_{n\geq 0}\frac{m}{m+2n}\binom{m+2n}{n}x^n.
	\end{equation}
	Using a form of Lagrange inversion, also in \cite{Gessel}, we compute the following formula for its reciprocal:
	\begin{equation}
		\label{eq:recCm}
		\frac{1}{C(x)^m}=\sum_{n= 0}^{\flo{m}}(-1)^n\frac{m}{m-n}\binom{m-n}{n}x^n+\sum_{n\geq\flo{m}+1} b_nx^n
	\end{equation}
	for some coefficients $b_n\in\R$.  Then, using the coefficient extractor function, where $[x^n](f)$ denotes the coefficient of $x^n$ in the power series expansion of $f$, we have 
	\begin{align*}
		[x^i]\left(C(x)^m\cdot\frac{1}{C(x)^m}\right)= 0=&  \sum_{k=0}^i(-1)^k\frac{m}{m-k}\binom{m-k}{k}\frac{m}{m+2(i-k)}\binom{m+2(i-k)}{i-k}
	\end{align*}
		for all $1\leq i\leq \flo{m}$, and this completes the proof.
	\end{proof}
	

\section{Proofs of \Cref{introthm:HRPn2} and \Cref{introthm:WPM}}
\label{sec:BD}
Now that we have the Macaulay dual generator of $A(m,2)$ we can compute its Hessian matrices.  By \Cref{prop:HilbertFunction}, we need only consider Hessians in degree $0\leq i\leq \flo{3(m-1)}<2m-1$.  For each $0\leq i\leq 2m-1$, let us fix the ordered monomial basis in degree $i$: 
$$\mathcal{E}_i=\left\{b^i_p=e_1^{i-2p}e_2^p \ | \ \flo{i+2-m}_\star\leq p\leq \flo{i}\right\}.$$
Note this is a basis by \Cref{prop:HilbertFunction} and \Cref{prop:thmB1}.
\begin{lemma}
	\label{lem:Hessian1}
	For each $i$, $0\leq i\leq \flo{3(m-1)}$, the $i^{th}$ Hessian matrix for $F_m(A,B)$ with respect to the monomial basis $\mathcal{E}_i$, evaluated at the point $\mathbf{c}=(1,0)$ is 
$$\operatorname{Hess}_{i}(F_m,\mathcal{E}_i)|_{\mathbf{c}}=\frac{1}{(3m-3-2i)!}\left(\frac{m}{3m-2-2p-2q}\binom{3m-2-2p-2q}{m-1-p-q}\right)_{\flo{i+2-m}_\star\leq p,q\leq \flo{i}}$$	
\end{lemma}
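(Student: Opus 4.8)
The plan is to compute each entry $b^i_pb^i_q\circ F_m$ of the $i^{th}$ Hessian directly from the differentiation action and then substitute $\mathbf{c}=(1,0)$. First I would observe that, since $e_1\circ=\partial/\partial E_1$ and $e_2\circ=\partial/\partial E_2$, the product basis element acts as $b^i_pb^i_q=e_1^{(i-2p)+(i-2q)}e_2^{p+q}=e_1^{2i-2p-2q}e_2^{p+q}$, i.e.\ as the differential operator $\partial_{E_1}^{\,2i-2p-2q}\partial_{E_2}^{\,p+q}$. Applying this to the explicit sum for $F_m$ from \Cref{prop:thmB2} term by term, the $n^{th}$ term contributes $\partial_{E_2}^{\,p+q}E_2^{m-n-1}=\tfrac{(m-n-1)!}{(m-n-1-p-q)!}E_2^{\,m-n-1-p-q}$ (and $0$ if $m-n-1<p+q$).

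The key observation is that evaluating at $E_2=0$ annihilates every surviving monomial whose $E_2$-exponent is positive. Hence among the $m$ terms of $F_m$ only the one with $m-n-1-p-q=0$, that is $n=m-1-p-q$, can contribute to the entry at $\mathbf{c}=(1,0)$. Substituting $n=m-1-p-q$ gives $m+2n=3m-2-2p-2q$, $m+2n-1=3m-3-2p-2q$, and $m-n-1=p+q$, so the selected coefficient of $F_m$ is $\tfrac{m}{3m-2-2p-2q}\binom{3m-2-2p-2q}{m-1-p-q}\tfrac{1}{(3m-3-2p-2q)!\,(p+q)!}$.

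Next I would carry out the two differentiations on this single term. The factor $\partial_{E_2}^{\,p+q}E_2^{\,p+q}=(p+q)!$ cancels the $(p+q)!$ in the denominator, while $\partial_{E_1}^{\,2i-2p-2q}E_1^{\,3m-3-2p-2q}=\tfrac{(3m-3-2p-2q)!}{(3m-3-2i)!}E_1^{\,3m-3-2i}$, and setting $E_1=1$ leaves $\tfrac{(3m-3-2p-2q)!}{(3m-3-2i)!}$, whose numerator cancels the remaining $(3m-3-2p-2q)!$ in the denominator of the coefficient. What survives is the $(p,q)$-independent factor $\tfrac{1}{(3m-3-2i)!}$ times $\tfrac{m}{3m-2-2p-2q}\binom{3m-2-2p-2q}{m-1-p-q}$, which is exactly the claimed matrix.

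The only points needing care are the boundary conditions, and I expect these to be the main (mild) obstacle rather than the computation itself. I would verify that for $(p,q)$ in the indexing range $\flo{i+2-m}_\star\leq p,q\leq\flo{i}$ no over-differentiation occurs: one has $0\leq 2i-2p-2q\leq 3m-3-2p-2q$ because $p+q\leq 2\flo{i}\leq i$, and $3m-3-2i\geq 0$ because $i\leq\flo{3(m-1)}$, so all factorials above are well defined. Finally, when $p+q>m-1$ the selected index $n=m-1-p-q$ falls outside $\{0,\ldots,m-1\}$, so no term of $F_m$ contributes and the entry is $0$; this matches the right-hand side, since under the convention $\binom{N}{k}=0$ for $k<0$ one has $\binom{3m-2-2p-2q}{m-1-p-q}=0$ when $m-1-p-q<0$. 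With these checks the entrywise identity holds for all $(p,q)$, completing the argument.
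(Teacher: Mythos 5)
Your proposal is correct and follows essentially the same route as the paper: apply the operator $e_1^{2i-2p-2q}e_2^{p+q}$ to the explicit sum for $F_m$, note that evaluation at $\mathbf{c}=(1,0)$ isolates the single term with $n=m-1-p-q$, and simplify the factorials. Your extra checks on the boundary cases (over-differentiation and the case $p+q>m-1$, handled via the convention $\binom{N}{k}=0$ for $k<0$) are sound and slightly more careful than what the paper writes out.
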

\begin{proof}
	By the formula for the $i^{th}$ Hessian, we have 
	$$\Hess_i(F_m,\mathcal{E}_i)=\left(e_1^{2i-2p-2q}e_2^{p+q}\circ F_m\right)_{\flo{i+2-m}_\star\leq p,q\leq \flo{i}}.$$
	Evaluating at $\mathbf{c}=(1,0)$, we have   
	\begin{align*}
	 e_1^{2i-2p-2q}e_2^{p+q}\circ F_m|_\mathbf{c}=\\
		e_1^{2i-2p-2q}e_2^{p+q}\circ \left.\left(\sum_{n=0}^{m-1}\frac{m}{m+2n}\binom{m+2n}{n}\frac{E_1^{m+2n-1}E_2^{m-n-1}}{(m+2n-1)!(m-n-1)!}\right)\right|_\mathbf{c}\\
		= e_1^{2i-2p-2q}e_2^{p+q}\circ\left.\left( \frac{m}{3m-2p-2q-2}\binom{3m-2p-2q-2}{m-p-q-1}\frac{E_1^{3m-2p-2q-3}E_2^{p+q}}{(3m-2p-2q-3)!(p+q)!}\right)\right|_\mathbf{c}\\
		=\frac{m}{3m-2p-2q-2}\binom{3m-2p-2q-2}{m-p-q-1}\frac{1}{(3m-3-2i)!}	
	\end{align*}
and the result follows.
\end{proof}

Recall that given a directed acyclic graph $G=(V(G),E(G))$ with vertex set $V(G)$ and directed edge set $E(G)$ together with a weight function $\omega\colon E(G)\rightarrow R$ taking values in some fixed commutative ring $R$ (for our purposes, $R=\Q$ will suffice), and further given row and column vertex subsets (finite and of the same cardinality will suffice for our purposes) $\mathcal{A}=\{A_1,\ldots,A_r\},\mathcal{B}=\{B_1,\ldots,B_r\}\subset V(G)$, one can form the associated (square $r\times r$) \emph{weighted path matrix}
$$W(G,\omega,\mathcal{A},\mathcal{B})=\left(\sum_{P\colon A_p\rightarrow B_j}\omega(P)\right)_{1\leq p,q\leq r}$$
where the sum is over all directed paths $P\colon A_p\to B_q$ from the row vertex $A_p$ to the column vertex $B_q$ in $G$ and $\omega(P)$ is the path-weight defined to be the product of the weights of all directed edges in the path, $\omega(P)=\prod_{e\in P}\omega(e)$.  

A \emph{path system from $\mathcal{A}$ to $\mathcal{B}$}, denoted by $\mathcal{P}\colon \mathcal{A}\rightarrow \mathcal{B}$, is a collection of paths $\mathcal{P}=\{P_i\colon A_i\rightarrow B_{\sigma(i)} \ | \ 1\leq i\leq r\}$ from the row vertices $\mathcal{A}$ to the column vertices $\mathcal{B}$ forming a bijection between the two sets.  Since $\mathcal{A}$ and $\mathcal{B}$ are indexed with the same set $\{1,\ldots,r\}$, we can associate a permutation $\sigma\in\mathfrak{S}_r$ to every path system, and we define the \emph{sign of the path system} to be the sign of the permutation, i.e. $\sgn(\mathcal{P})=\sgn(\sigma)$.  Define the \emph{weight of the path system} to be the product of the weights of its paths, denoted $\omega(\mathcal{P})=\prod_{i=1}^n\omega(P_i)$.  We say that a path system is \emph{vertex disjoint} if no two paths in the path system share a common vertex.  The following very useful result is due to Lindstr\"om \cite{Lindstrom} and Gessel-Viennot \cite{GV}.
\begin{fact}
	\label{fact:LGV}
	The determinant of the weighted path matrix is 
	$$\det\left(W(G,\omega,\mathcal{A},\mathcal{B})\right)=\sum_{\substack{\mathcal{P}\colon\mathcal{A}\rightarrow\mathcal{B}\\ \text{vertex disjoint}\\}}\sgn(\mathcal{P})\omega(\mathcal{P})$$
	where the sum is over all vertex disjoint path systems from $\mathcal{A}$ to $\mathcal{B}$.
\end{fact}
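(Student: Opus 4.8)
The plan is to read the displayed matrix as a weighted path matrix for subdiagonal lattice paths, apply the Lindström--Gessel--Viennot lemma (\Cref{fact:LGV}), and then show that after cancellation a single governing permutation survives. First I would give the entries a combinatorial meaning. Writing $g(n)=[x^n]C(x)^m=\frac{m}{m+2n}\binom{m+2n}{n}$ as in \Cref{eq:Cm}, the $(p,q)$ entry is exactly $g(m-1-p-q)$, so it depends only on $p+q$ and the matrix is Hankel. With $A_p=(p,p)\in\mathcal{A}^m_i$ and $B_q=(2m-2-q,m-1-q)\in\mathcal{B}^m_i$, the displacement $A_p\to B_q$ has $2m-2-p-q$ east steps and $m-1-p-q$ north steps, so east minus north equals the constant $m-1$ and east-plus-north-plus-one equals $3m-2-2p-2q$. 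The cycle lemma (equivalently the reflection principle) then identifies the number of NE lattice paths from $A_p$ to $B_q$ staying weakly below the main diagonal with the ballot number $\frac{m}{3m-2-2p-2q}\binom{3m-2-2p-2q}{m-1-p-q}=g(m-1-p-q)$, the leading factor $m=(\text{east}-\text{north})+1$ being precisely the ballot correction. Hence the matrix is the unit-weight path matrix $W(G,\omega,\mathcal{A}^m_i,\mathcal{B}^m_i)$, where $G$ is the DAG on lattice points below the diagonal with unit-weighted $E$ and $N$ edges.

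Next I would invoke \Cref{fact:LGV}. Since every weight is $1$, the determinant equals $\sum_{\mathcal P}\sgn(\mathcal P)$ over vertex-disjoint systems $\mathcal P\colon\mathcal{A}^m_i\to\mathcal{B}^m_i$. The key geometric input is that, as $p$ grows, the sources march up the diagonal from southwest to northeast, while the sinks march down the line $y=x-(m-1)$ from northeast to southwest. A crossing analysis, refined by the doubly-vertex-disjoint condition, shows that after cancellation only the order-reversing matching $A_p\mapsto B_{p_{\min}+p_{\max}-p}$ contributes; its sign factors out of the sum and supplies the global factor $(-1)^{h_i}$ asserted in the statement, leaving an honest nonnegative count of doubly vertex disjoint systems. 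Concretely, the doubly-disjoint requirement is designed to furnish a sign-reversing involution annihilating the nested/crossing configurations of the opposite parity, so that the signed LGV sum collapses onto one definite-sign family.

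Finally, for the nonvanishing I would argue that the determinant is nonzero iff at least one doubly vertex disjoint system exists. After shifting indices so that $p$ runs over $\{0,\dots,h_i-1\}$, the entries read $g(c-a-b)$ with $c=m-1-2\flo{i+2-m}_\star$; since $g$ is supported on nonnegative arguments, a triangular block of forced zeros appears once $c$ is small, and evaluating the resulting anti-triangular structure together with a packing/pigeonhole count of how many non-crossing subdiagonal paths can be threaded through the width-$m$ strip between the diagonal and the sink line pins the threshold at $2h_i\le m$. The hard part will be the sign bookkeeping of the second paragraph: because both the order-preserving and the order-reversing matchings admit vertex-disjoint subdiagonal systems, the raw LGV sum is a genuine signed difference, and the whole force of the doubly-vertex-disjoint notion is to produce the cancelling involution that isolates a single definite-sign family --- getting this involution and its surviving sign right, and then locating the precise vanishing locus from the boundary behaviour of the Hankel determinant, is the crux.
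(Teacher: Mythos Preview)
Your proposal is not a proof of \Cref{fact:LGV} at all. \Cref{fact:LGV} is the general Lindstr\"om--Gessel--Viennot lemma for an arbitrary weighted acyclic digraph $G$, arbitrary weight $\omega$, and arbitrary vertex sets $\mathcal{A},\mathcal{B}$; in the paper it is stated as a cited \emph{Fact} with references to \cite{Lindstrom} and \cite{GV}, and no proof is given. A proof would proceed by expanding the determinant as a signed sum over all path systems and exhibiting the standard tail-swapping involution on non-vertex-disjoint systems.

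What you have written is instead a proof sketch for \Cref{introthm:WPM} (equivalently, \Cref{lem:mDVD} together with \Cref{cor:Lefschetz}), the specific result about the Hankel matrix with entries $\frac{m}{3m-2-2p-2q}\binom{3m-2-2p-2q}{m-1-p-q}$. You \emph{invoke} \Cref{fact:LGV} rather than prove it, and the ``displayed matrix,'' the generating function $g(n)=[x^n]C(x)^m$, the sources $A_p=(p,p)$, and the sinks $B_q=(2m-2-q,m-1-q)$ all come from \Cref{introthm:WPM}, not from \Cref{fact:LGV}. As a sketch of \Cref{introthm:WPM} your outline is essentially the paper's own argument: interpret entries as subdiagonal path counts (the paper cites \Cref{fact:K} for this rather than the cycle lemma, but the content is the same), apply LGV, construct a sign-reversing involution on vertex-disjoint systems that are not doubly vertex disjoint (the paper's flip-and-tail-swap map in the proof of \Cref{lem:mDVD}), observe that the surviving systems all carry the order-reversing permutation, and finish the nonvanishing with the packing/pigeonhole argument of \Cref{cor:Lefschetz}. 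But none of that is what was asked: \Cref{fact:LGV} needs either the standard LGV proof or a citation, and the paper chose the latter.
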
  
Typically \Cref{fact:LGV} is useful in cases where the set of vertex disjoint path systems all have the same sign, although this will not quite the case in our situation.

In our situation, we shall consider the following directed acyclic graph $G$.  Let $V(G)=\Z^2$, the set of lattice points in the plane, and let $E(G)$ denote the set of North edges $(i,j)\rightarrow (i,j+1)$ together with the set of East edges $(i,j)\rightarrow (i+1,j)$; in this case the directed paths are NE lattice paths.  A useful and thorough reference for NE lattice path enumeration is C. Krattenhaler's comprehensive paper \cite{K}.  

We will actually restrict our attention to the induced sugraph $G'$ with vertex set $V(G')=\left\{(i,j) \ | \ i\geq j\right\}\subset\Z^2$ consisting of lattice points lying on or below the main diagonal line $y=x$.  Hence in the directed graph $G'$, directed paths are \emph{subdiagonal} NE lattice paths.  Define the constant edge-weight function $\omega(e)\equiv 1$, so that every directed edge, and hence also every directed path, has weight equal to one. Our row and column vertex sets depend on the nonnegative integers $i,m$ satisfying $0\leq i\leq \flo{3(m-1)}$, and are given by
\begin{align*}
	\mathcal{A}=\mathcal{A}_i^m =& \left\{(p,p) \ \left| \ \flo{i+2-m}_\star\leq p\leq\flo{i}\right.\right\}\\
	\mathcal{B}= \mathcal{B}_i^m=& \left\{(2m-2-q,m-1-q) \ \left| \ \flo{i+2-m}_\star\leq q\leq\flo{i}\right.\right\}.
\end{align*}   
Note the vertices $\mathcal{A}^m_i$ lie on the diagonal line $y=x$, and the vertices $\mathcal{B}^m_i$ lie on the shifted diagonal line $y=x-(m-1)$.  The following is \cite[Corollary 10.3.2]{K}.
\begin{fact}
	\label{fact:K}
	The number of subdiagonal NE lattice paths from a vertex $A=(a,a)\in V(G')$ on the diagonal line $y=x$ to an arbitrary vertex $B=(b,c)\in V(G')$ is equal to 
	$$\frac{b-c+1}{b+c-2a+1}\binom{b+c-2a+1}{c-a}.$$ 
\end{fact}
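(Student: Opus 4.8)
The plan is to prove this ballot-type count by André's classical reflection principle, realizing the subdiagonal enumeration as a difference of two unrestricted binomial coefficients. First I would record the elementary bookkeeping: a NE lattice path from $A=(a,a)$ to $B=(b,c)$ consists of exactly $b-a$ East steps and $c-a$ North steps, so it has $N:=b+c-2a$ steps in total, of which $k:=c-a$ are North; the number of \emph{unrestricted} such paths is therefore $\binom{N}{k}=\binom{b+c-2a}{c-a}$. The subdiagonal constraint is that every vertex $(i,j)$ on the path satisfies $i\geq j$; since consecutive vertices differ by a single unit step, a path violates this constraint if and only if it touches the forbidden line $y=x+1$ (i.e.\ reaches some vertex with $j=i+1$), because to pass from $j-i\leq 0$ to $j-i\geq 2$ it must first land on $j-i=1$.

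Next I would apply the reflection across the line $y=x+1$, which sends a lattice point $(i,j)$ to $(j-1,i+1)$ and fixes that line pointwise; in particular it sends $A=(a,a)$ to $A':=(a-1,a+1)$, which lies strictly above the forbidden line, while $B$ lies strictly below it. Following André's argument, reflecting the initial segment of a \emph{bad} path up to its first contact with $y=x+1$ sets up a bijection between bad paths from $A$ to $B$ and \emph{all} unrestricted paths from $A'$ to $B$: every $A'\to B$ path must meet the line since its endpoints lie on opposite sides, and reflecting back recovers a bad $A\to B$ path, the two operations being mutually inverse. A path from $A'=(a-1,a+1)$ to $B=(b,c)$ uses $b-a+1$ East and $c-a-1$ North steps, again $N$ steps in total, so the number of bad paths equals $\binom{N}{k-1}=\binom{b+c-2a}{c-a-1}$.

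Subtracting, the number of subdiagonal NE paths from $A$ to $B$ is $\binom{N}{k}-\binom{N}{k-1}$, and the proof concludes with the routine binomial simplification
$$\binom{N}{k}-\binom{N}{k-1}=\frac{N-2k+1}{N+1}\binom{N+1}{k}=\frac{b-c+1}{b+c-2a+1}\binom{b+c-2a+1}{c-a},$$
where the final equality uses $N-2k+1=b-c+1$ and $N+1=b+c-2a+1$. I expect no genuine obstacle: the only point requiring care is the justification that a subdiagonal violation is \emph{exactly} a contact with $y=x+1$ (so that the reflection is taken across that line and $A'$ lands one diagonal above the naive reflection across $y=x$), together with the observation that $B\in V(G')$ forces $b\geq c$, which guarantees nonnegativity of the ballot factor $b-c+1$ and the consistency of the whole count.
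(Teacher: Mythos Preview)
Your argument is correct. The reflection across $y=x+1$ is applied properly, the bijection between bad paths and unrestricted $A'\to B$ paths is justified, and the binomial simplification is accurate.

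As for comparison: the paper does not actually prove this statement. It is recorded as a \emph{Fact} and attributed to Krattenthaler's survey \cite[Corollary~10.3.2]{K}, with no argument given. So you have supplied a self-contained proof where the paper simply cites the literature. Your reflection-principle argument is exactly the classical one (this is the standard ballot/Cycle-Lemma style count underlying the Catalan-type formula), and there is nothing to correct.
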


Equipped with \Cref{fact:K}, we can identify the Hessian with a multiple of the weighted path matrix described above $W(G',\omega'\equiv 1,\mathcal{A}_i^m,\mathcal{B}_i^m)$.  This is stated formally below.
\begin{lemma}
	\label{lem:PHessWPM}
	For each $0\leq i\leq \flo{3(m-1)}$, the $i^{th}$ Hessian of $F_m(E_1,E_2)$ with respect to the ordered monomial basis $\mathcal{B}_i$ satisfies 
	$$\Hess_{i}(F_m,\mathcal{E}_i)|_{(1,0)}=\frac{1}{(3m-3-2i)!}W(G',\omega'\equiv 1, \mathcal{A}^m_i,\mathcal{B}^m_i).$$ 
\end{lemma}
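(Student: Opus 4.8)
The plan is to verify the asserted matrix identity entrywise, by matching the explicit formula for the Hessian entries furnished by \Cref{lem:Hessian1} against the explicit path-count formula furnished by \Cref{fact:K}. First I would recall from \Cref{lem:Hessian1} that the evaluated Hessian is precisely $\frac{1}{(3m-3-2i)!}$ times the matrix whose $(p,q)$ entry is $\frac{m}{3m-2-2p-2q}\binom{3m-2-2p-2q}{m-1-p-q}$, with $p,q$ ranging over $\flo{i+2-m}_\star\leq p,q\leq\flo{i}$. On the other side, since the edge-weight $\omega'$ is identically $1$, every NE lattice path has weight $1$, so by definition the $(p,q)$ entry of $W(G',\omega'\equiv 1,\mathcal{A}^m_i,\mathcal{B}^m_i)$ is simply the number of subdiagonal NE lattice paths from the row vertex $A_p=(p,p)$ to the column vertex $B_q=(2m-2-q,m-1-q)$. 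Comparing the two, the entire claim reduces to showing that this path count equals the displayed binomial expression.

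For this I would apply \Cref{fact:K} directly, taking the source $A=(a,a)$ with $a=p$ (legitimate since $A_p$ lies on the diagonal $y=x$, as required by the hypothesis) and the target $B=(b,c)=(2m-2-q,m-1-q)$. The computation is a one-line substitution:
\begin{align*}
b-c+1 &= m, & b+c-2a+1 &= 3m-2-2p-2q, & c-a &= m-1-p-q,
\end{align*}
so \Cref{fact:K} evaluates to exactly $\frac{m}{3m-2-2p-2q}\binom{3m-2-2p-2q}{m-1-p-q}$. Inserting this into the weighted path matrix and factoring out the common scalar $\frac{1}{(3m-3-2i)!}$ yields the claimed equality of matrices.

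There is no genuinely hard step here: the content of the proof is bookkeeping, namely the coordinate substitution into \Cref{fact:K} and the observation that the two matrices are indexed by the identical ordered set $\flo{i+2-m}_\star\leq p,q\leq\flo{i}$. The one point deserving a word of care is checking that the hypotheses of \Cref{fact:K} are met: the source vertex must lie on the line $y=x$ (true for $A_p=(p,p)$) and the target vertex must lie in $V(G')$, i.e. on or below $y=x$. The latter holds because $B_q=(2m-2-q,m-1-q)$ satisfies $(2m-2-q)-(m-1-q)=m-1\geq 0$, so $B_q$ is subdiagonal; I would record this observation explicitly to license the use of \Cref{fact:K}. I would also note in passing that the binomial expression is symmetric in $p$ and $q$, consistent with both the symmetry of the Hessian matrix and the fact that the count depends on $A_p$ and $B_q$ only through the combination $p+q$.
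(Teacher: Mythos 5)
Your proof is correct and follows essentially the same route as the paper's: both identify the $(p,q)$ entry of the weighted path matrix as the subdiagonal NE lattice path count from $A_p$ to $B_q$, evaluate it via \Cref{fact:K} with the substitution $a=p$, $(b,c)=(2m-2-q,m-1-q)$, and match the result against the entrywise formula of \Cref{lem:Hessian1}. Your additional checks (that $B_q$ lies in $V(G')$ and that the expression is symmetric in $p+q$) are sound but not needed beyond what the paper records.
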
 
\begin{proof}
	The $(p,q)$-entry of $W$ is $\sum_{P\colon A_p\rightarrow B_q}\omega'(P)$ which, in this case, equals the number of subdiagonal NE lattice paths from $A_p$ to $B_q$, which, according to \Cref{fact:K}, equals
	$$\frac{m}{3m-2-2q-2p}\binom{3m-2-2q-2p}{m-1-q-p}.$$
	The result now follows immediately from \Cref{lem:Hessian1}. 
\end{proof}

Note that our setup allows for vertex disjoint path systems from $\mathcal{A}^m_i$ to $\mathcal{B}_i^m$ with distinct signs, and hence we expect further cancellations to occur in the sum in \Cref{fact:LGV}.  It turns out however that we can keep track of the cancellations that occur, and characterize the terms that persist.  To this end, let us define the \emph{doubly vertex disjoint path systems}, as follows.

Generally, given a NE lattice path in $G'$, say $P\colon A=v_0\rightarrow\cdots\rightarrow v_{N}=B$ from a vertex $A=(a,a)$ on the diagonal line $y=x$ to $B=(b+m-1,b)$ on the shifted diagonal line $y=x-m+1$, define its \emph{initial segment} $I\colon A=v_0\rightarrow v_j$ where $v_j$ is the first vertex in $P$ that lies on the shifted diagonal $y=x-m+1$, and define a \emph{primitive segment} of $P$ to be a segment $S\colon v_k\rightarrow v_{k+1}\rightarrow\cdots v_{\ell-1}\rightarrow v_\ell$ such that the two endpoints of $S$, $v_k$ and $v_\ell$, are the only vertices of $S$ to lie on the shifted diagonal.  We shall refer to an \emph{upper}, respectively \emph{lower}, primitive segment as one whose vertices lie on or above, respectively on or below, the shifted diagonal.  We say that the path $P$ is upper if it is an initial segment itself, i.e. it has no primitive segments, or if it has only upper primitive segments, and we say that $P$ is lower otherwise.  Given a lower path $P$, we get an upper path $F(P)$, called the \emph{flip} of $P$, obtained from $P$ by keeping its initial and upper primitive segments fixed, and reflecting each of its lower primitive segments over the shifted diagonal line.  If $P$ is an upper path, define $F(P)=P$; its flip is itself.  Given a path system $\mathcal{P}\colon \mathcal{A}\rightarrow\mathcal{B}$ from some set of row vertices, say $\mathcal{A}$, on the diagonal, to some set of column vertices, say $\mathcal{B}$, lying on the shifted diagonal, we define its flipped path system $F(\mathcal{P})\colon\mathcal{A}\rightarrow\mathcal{B}$ to be the path system whose paths are the flips of paths in $\mathcal{P}$.  

In our case where $\mathcal{A}=\mathcal{A}^m_i$ and $\mathcal{B}=\mathcal{B}^m_i$, note that if $\mathcal{P}\colon\mathcal{A}\rightarrow\mathcal{B}$ is a sub-diagonal NE lattice path system, then so is its flipped path system $F(\mathcal{P})\colon\mathcal{A}\rightarrow\mathcal{B}$.  Indeed, in such a path system all lower primitive segments are contained in the square lying along the $x$-axis, with its top vertex $B_{\flo{i+2-m}_\star}\in\mathcal{B}_i^m$, and its diagonal lying along the shifted diagonal line $y=x+m-1$, and that square, closed under flips, lies completely under the diagonal line $y=x$.
  
Note, however, that flips need not preserve the vertex disjoint property; we say that a path system $\mathcal{P}$ is \emph{doubly vertex disjoint} if the path system $\mathcal{P}$ and its flipped path system $F(\mathcal{P})$ are both vertex disjoint.  Note that in our case, if a path system $\mathcal{P}\colon\mathcal{A}\rightarrow \mathcal{B}$ is doubly vertex disjoint then its permutation is necessarily the order reversing permutation, and hence its sign is constant.  
The following result is the first assertion of \Cref{introthm:WPM} from the Introduction.  It is certainly inspired by the ``traditional'' proof of \Cref{fact:LGV}.  
\begin{proposition}
	\label{lem:mDVD}
For any non-negative integers $i,m$ satisfying $0\leq i\leq \flo{3(m-1)}$, the determinant of the weighted path matrix 
$$W(G',\omega'\equiv 1, \mathcal{A}^m_i,\mathcal{B}^m_i)=\left(\frac{m}{3m-2-2p-2q}\binom{3m-2-2p-2q}{m-1-p-q}\right)_{\flo{i+2}_\star\leq p,q\leq \flo{i}}$$
is equal to $(-1)^{\flo{\flo{i+2}-\flo{i+2-m}_\star}}$ times the number of doubly vertex disjoint subdiagonal NE lattice path systems from $\mathcal{A}^m_i$ to $\mathcal{B}^m_i$. 
\end{proposition}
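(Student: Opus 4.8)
The plan is to deduce the statement from the Lindström--Gessel--Viennot lemma (\Cref{fact:LGV}) together with a sign-reversing involution that kills every vertex disjoint path system that fails to be doubly vertex disjoint. Since every edge, and hence every directed path, carries weight $\omega'\equiv 1$, \Cref{fact:LGV} gives immediately
\[
\det\left(W(G',\omega'\equiv 1,\mathcal{A}^m_i,\mathcal{B}^m_i)\right)=\sum_{\substack{\mathcal{P}\colon\mathcal{A}^m_i\rightarrow\mathcal{B}^m_i\\ \text{vertex disjoint}}}\sgn(\mathcal{P}).
\]
So I would partition the vertex disjoint path systems into the doubly vertex disjoint ones and the rest, and show that (a) the doubly vertex disjoint systems all contribute the same sign, and (b) the remaining terms cancel in pairs.

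For (a): as noted just before the statement, any doubly vertex disjoint system $\mathcal{P}$ realizes the order-reversing bijection from $\mathcal{A}^m_i$ to $\mathcal{B}^m_i$, so $\sgn(\mathcal{P})$ equals the sign of the reversal permutation on $r=\flo{i+2}-\flo{i+2-m}_\star$ letters. Since that permutation has $\binom{r}{2}$ inversions and $\binom{r}{2}\equiv \flo{r}\pmod 2$, this common sign is $(-1)^{\flo{r}}=(-1)^{\flo{\flo{i+2}-\flo{i+2-m}_\star}}$, matching the claimed prefactor. Thus, granting (b), the sum collapses to this sign times the number of doubly vertex disjoint systems, which is exactly the assertion.

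For (b): the engine is a tail-swap involution modelled on the traditional proof of \Cref{fact:LGV}, but applied to the flipped system rather than to $\mathcal{P}$ itself. Given a vertex disjoint $\mathcal{P}$ that is not doubly vertex disjoint, its flip $F(\mathcal{P})$ is by definition not vertex disjoint; I would select the distinguished intersection vertex $w$ of $F(\mathcal{P})$ (say, the lexicographically first such vertex) at which two flipped paths $F(P_a),F(P_b)$ meet, and exchange their tails beyond $w$. Because $\mathcal{P}$ and $F(\mathcal{P})$ share the same endpoints and induce the same bijection, and because swapping tails composes that bijection with the transposition $(a\,b)$, the paired system has opposite sign. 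The point I would verify is that $w$ records exactly one upper and one lower excursion of $\mathcal{P}$ at the mirror pair $\{w,\bar w\}$ across the shifted diagonal --- this is forced by the vertex disjointness of $\mathcal{P}$ --- so the swap transports back to a genuine, still vertex disjoint, system $\mathcal{P}'$ whose segment-types are reassigned in the unique way compatible with the swap, and $F(\mathcal{P}')$ retains the same distinguished crossing. This makes $\mathcal{P}\mapsto\mathcal{P}'$ an involution with no fixed points on the non-doubly-vertex-disjoint locus.

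The hard part will be precisely this last verification. The flip $F$ is not a bijection --- it is a retraction of all subdiagonal paths onto the upper ones --- so there is no automatic way to undo a tail-swap performed in the flipped picture. I would therefore track the up/down type of each primitive segment of each path, show that the mirror-pair dichotomy at $w$ makes the reverse type-assignment unique, and confirm both that $\mathcal{P}'$ stays subdiagonal and vertex disjoint and that applying the construction to $\mathcal{P}'$ recovers $\mathcal{P}$. Establishing these compatibilities --- essentially that the tail-swap and the flip commute in the appropriate sense, while never leaving the vertex disjoint locus furnished by \Cref{fact:LGV} --- is the technical heart of the argument; once it is in place, (a) and (b) combine to give the stated determinant evaluation.
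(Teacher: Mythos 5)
Your proposal is correct and follows essentially the same route as the paper: the Lindstr\"om--Gessel--Viennot lemma, the observation that doubly vertex disjoint systems realize the order-reversing permutation of sign $(-1)^{\flo{r}}$ with $r=\flo{i+2}-\flo{i+2-m}_\star$, and a sign-reversing tail-swap involution performed at an extremal crossing vertex of the flipped system and then transported back by un-flipping. The ``technical heart'' you flag is resolved in the paper exactly as you anticipate: vertex disjointness of $\mathcal{P}$ forces exactly one of the two primitive segments meeting at the chosen crossing to be lower (so the reverse type-assignment is unique), and the northernmost-then-easternmost choice of that crossing rules out any new intersection in the swapped system, making the map a fixed-point-free involution on the non-doubly-vertex-disjoint locus.
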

\begin{proof}
Let $\mathcal{N}=\mathcal{N}(\mathcal{A},\mathcal{B})$ denote the set of vertex disjoint path systems from $\mathcal{A}$ to $\mathcal{B}$ which are not doubly vertex disjoint.  Let us neglect for the moment the assertion of sign.  Then by \Cref{fact:LGV} and the remarks above, it suffices to show that there is a sign-reversing bijection $\phi\colon\mathcal{N}\rightarrow\mathcal{N}$, so that the path systems in $\mathcal{N}$ all cancel out with each other in the determinant, leaving only the doubly vertex disjoint path systems to be counted.  

To this end, fix a path system $\mathcal{P}\in\mathcal{N}$ and let $F(\mathcal{P})$ be its flip.  Let $c=(a,b)$ be the northern most, then eastern most vertex at which two (unique!) paths, say $F(P_r),F(P_s)\in F(\mathcal{P})$, intersect.  Let $F(S_r)\subset F(P_r)$ and $F(S_s)\subset F(P_s)$ be the primitive (or initial) segments containing $c$.  Since $\mathcal{P}$ is vertex disjoint, it follows that exactly one of $S_r$ or $S_s$ was lower, and the other is either initial or upper; after relabeling if necessary, we may assume that $S_r$ is the lower primitive segment.  For a point $z=(x,y)$ in the plane, let $\tilde{z}=(y-m+1,x+m-1)$ be its reflection across the shifted diagonal line $y=x-m+1$.  Setting 
\begin{align*}
	S_r= & v_j\rightarrow v_{j+1}\rightarrow\cdots v_d=\tilde{c}\rightarrow v_{d+1}\rightarrow \cdots v_J\\
	S_s= & w_k\rightarrow w_{k+1}\rightarrow \cdots w_e=c\rightarrow w_{e+1}\rightarrow \cdots w_K
\end{align*}
we have
\begin{align*}
		F(S_r)= & v_j=\tilde{v_j}\rightarrow \tilde{v_{j+1}}\rightarrow\cdots \tilde{v_d}=c\rightarrow \tilde{v_{d+1}}\rightarrow \cdots \tilde{v_J}=v_J\\
	F(S_s)= & w_k\rightarrow w_{k+1}\rightarrow \cdots w_e=c\rightarrow w_{e+1}\rightarrow \cdots w_K.
\end{align*}
Define new segments $S_r'$ and $S_s'$ by swapping the final legs after $c$ in the flipped segments $F(S_r)$ and $F(S_s)$, and then flipping back, i.e.
\begin{align*}
	S_r'= & v_j\rightarrow v_{j+1}\rightarrow\cdots v_d=\tilde{c}\rightarrow \tilde{w_{e+1}}\rightarrow \cdots \tilde{w_K}\\
	S_s'= & w_k\rightarrow w_{k+1}\rightarrow \cdots w_e=c\rightarrow \tilde{v_{d+1}}\rightarrow \cdots \tilde{v_J}.
\end{align*}
Then to define the new path system $\phi(\mathcal{P})\colon\mathcal{A}\rightarrow\mathcal{B}$, we simply replace segments $S_r\subset P_r$ and $S_s\subset P_s$ by the new segments $S_r'$ and $S_s'$ to obtain new paths $P_r'$ and $P_s'$, and keep all other paths the same.  See \Cref{fig1}.
\begin{figure}[h!]
	\includegraphics[width=12cm]{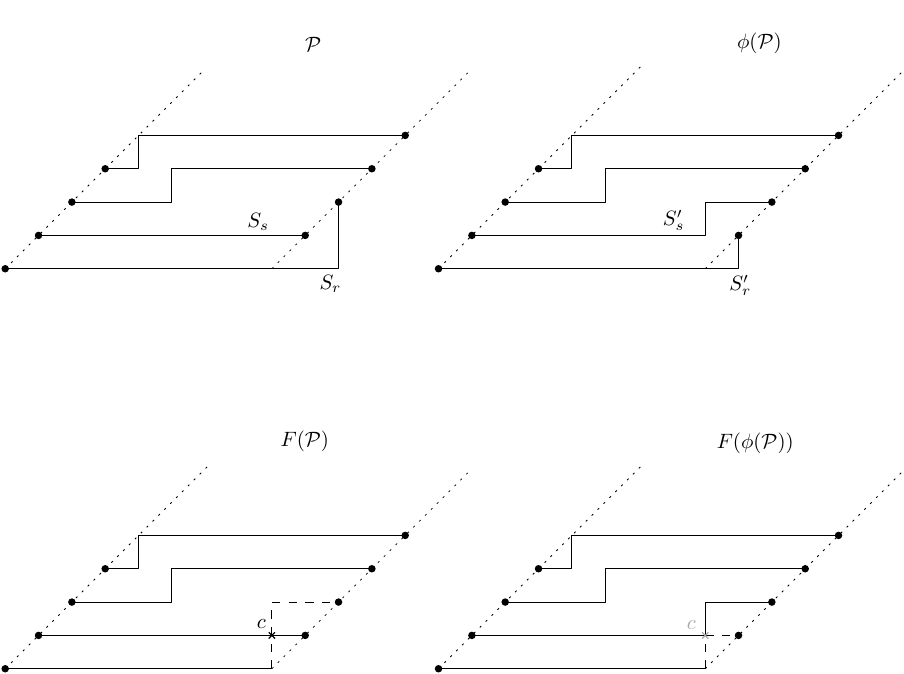}
	\caption{A path system, its image under the involution $\phi$, and their associated flipped path systems}
	\label{fig1}
\end{figure}    

Note that $\mathcal{P}$ and $\phi(\mathcal{P})$ have opposite signs since their corresponding permutations differ by a single transposition.  Also note that $\phi$ is invertible; in fact it is an involution.  It remains to see that $\phi(\mathcal{P})$ is vertex disjoint, or in other words that $\phi(\mathcal{P})\in \mathcal{N}$.  If not, then the new crossing must occur in one of the new segments $S_r'$ or $S_s'$, and then after $\tilde{c}$ or $c$, on the new part.  Suppose, for concreteness, that the crossing occurs at some vertex $\tilde{w_b}\in S_r'$ for some $k+1\leq b\leq K$.  Then $\tilde{w_b}$ lies in paths $P_r'$ and $P_t'$ in $\phi(\mathcal{P})$ for some $t\neq r$.  That means that in the original flipped path system $F(\mathcal{P})$, $w_b$ belongs to both paths $F(P_r)$ and $F(P_t)$.  On the other hand, since $w_b$ comes after $c$, $w_b$ is more northerly-easterly of $c$, in contradiction to our original choice of $c$.  Therefore there can be no crossing of paths in the path system $\phi(\mathcal{P})$, and hence $\phi(\mathcal{P})\in\mathcal{N}$.  Thus we have produced a sign-reversing bijection $\phi\colon\mathcal{N}\rightarrow\mathcal{N}$, as desired.

Finally, note that for any doubly vertex disjoint path system $\mathcal{P}$ from $\mathcal{A}^m_i$ to $\mathcal{B}^m_i$, either $\mathcal{P}$ or its flipped path system $F(\mathcal{P})$ must be a vertex disjoint path system completely contained in the parallelogram bounded by the diagonal lines $y=x$ and $y=x-m+1$ and horizontal lines $y=\flo{i+2-m}_\star$ and $y=m-1-\flo{i+2-m}_\star$.  Since $\mathcal{P}$ and $F(\mathcal{P})$ have the same sign, it follows that the permutation of $\mathcal{P}$ is the order-reversing permutation on the indexing set $\{1,\ldots,h_i\}$ where $h_i=\flo{i+2}-\flo{i+2-m}_\star=\#\mathcal{A}^m_i$, which has sign $(-1)^{\flo{h_i}}$
and the result follows.
\end{proof}

It follows from \Cref{lem:mDVD} that the determinant of the weighted path matrix $W(G',\omega'\equiv 1,\mathcal{A}^m_i,\mathcal{B}^m_i)$, and hence the determinant of the higher Hessian matrix $\Hess_i(F_m,\mathcal{E}_i)$ is nonzero precisely when the number of doubly vertex disjoint subdiagonal NE lattice path systems from $\mathcal{A}^m_i$ to $\mathcal{B}^m_i$ is nonzero; denote this number by $N(i,m)$.  Note that if $0\leq i\leq \flo{3(m-1)}$, then $$h_i=\flo{i+2}-\flo{i+2-m}_\star=H(m,2)_i=\#\mathcal{A}_i^m.$$ 
The following result amounts to the second assertion of \Cref{introthm:WPM}.
\begin{proposition}
	\label{cor:Lefschetz}
	For each $0\leq i\leq \flo{3(m-1)}$, the number $N(i,m)$ is nonzero if and only if $2\cdot h_i\leq m$. 
\end{proposition}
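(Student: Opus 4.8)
The plan is to reduce the nonvanishing of $N(i,m)$ to a packing question for non-crossing lattice paths in a strip, and then settle it by a pigeonhole bound (necessity) together with an explicit construction (sufficiency).

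First I would record the reduction implicit in the proof of \Cref{lem:mDVD}. Since the flip $F$ carries any doubly vertex disjoint system to a vertex disjoint system of \emph{upper} paths, while conversely every vertex disjoint system of upper paths equals its own flip and is therefore doubly vertex disjoint, one has $N(i,m)\neq 0$ if and only if there exists a vertex disjoint family of upper subdiagonal NE lattice paths from $\mathcal{A}^m_i$ to $\mathcal{B}^m_i$. As already observed before the proposition, any such family automatically realizes the order-reversing matching and lies in the parallelogram $\Pi$, so $\Pi$ need not be tracked separately. The bookkeeping device is the anti-diagonal: every NE path meets each line $x+y=w$ in exactly one lattice point, and the lattice points of that line lying in the strip $0\le x-y\le m-1$ are exactly those with $x-y\in\{0,1,\dots,m-1\}$ and $x-y\equiv w\pmod 2$. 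Writing $a=\flo{i+2-m}_\star$ and $b=\flo{i}$, the sources have anti-diagonal value $2p$ $(a\le p\le b)$ and the sinks value $3m-3-2q$ $(a\le q\le b)$, so under the forced order-reversing matching all $h:=h_i$ paths cross $x+y=w$ precisely when $w$ lies in the coexistence interval $[2b,\,3m-3-2b]$.

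For necessity, suppose a vertex disjoint upper family exists and choose two \emph{consecutive} anti-diagonals $x+y=w$ and $x+y=w+1$ inside the coexistence interval. On the first, the $h$ path-points realize $h$ distinct values of $x-y$, all of one parity; on the second, $h$ distinct values of the opposite parity. These $2h$ values are pairwise distinct (opposite parities cannot coincide) and all lie in $\{0,1,\dots,m-1\}$, forcing $2h\le m$. This is the cleanest form of the pigeonhole and yields the inequality with no parity case-split. For sufficiency, assuming $2h\le m$, I would exhibit one such family, so that $N(i,m)\ge 1$. The idea is to pack $h$ non-crossing paths into $h$ horizontal bands, each of width two in the coordinate $x-y$, which fit in the strip precisely because $2h\le m$ makes the top band $\{2h-2,2h-1\}$ satisfy $2h-1\le m-1$. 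Route the earlier sources into the higher bands and later sources into the lower ones, so that as each path rises from the diagonal into its band it stays below the already-placed (higher, earlier) paths; symmetrically, drain the paths to the sinks in a cascade, the highest band exiting first, so that each path reaches the shifted diagonal only through a region already vacated by its predecessors. A direct check shows this family is vertex disjoint, consists of upper paths, and sends the $k$-th source to the correct sink $B_{b-k}$.

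I expect the main obstacle to be twofold. The delicate quantitative point in necessity is that the two-consecutive-anti-diagonal argument needs the coexistence interval $[2b,\,3m-3-2b]$ to contain at least two integers, i.e. to have length $3m-3-4b\ge 1$; this holds throughout the range required, but it degenerates to a single point exactly at the middle degree when $d=3(m-1)$ is even, where the middle intersection pairing is automatically nondegenerate and hence must be handled on its own. The more laborious point is the sufficiency construction: turning the ``stacked bands with staggered entrances and a cascading exit'' picture into honest NE lattice paths and verifying carefully that no two of them share a vertex and that each lands on its prescribed sink. I would organize that verification by tracking, along each anti-diagonal, the ordered list of occupied heights and checking that it remains inside $\{0,\dots,m-1\}$ and strictly increasing in the path index; this is exactly the step where the hypothesis $2h\le m$ is used decisively.
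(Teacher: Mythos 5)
Your approach coincides with the paper's in both directions: the reduction to a packing problem in the strip $0\le x-y\le m-1$, the sufficiency construction via $h_i$ disjoint width-two ``tubes'' $x-y\in\{2j,2j+1\}$ (the paper's adjacent diagonals $y=x-2p$ and $y=x-2p-1$ are exactly your bands), and a pigeonhole argument for necessity. Your necessity step is a sharper localization of the paper's --- the paper counts all lattice points of the closed parallelogram and argues there are too few, while you count occupied residues of $x-y$ on two consecutive anti-diagonals --- and it is precisely this refinement that forces you to notice the degenerate case.

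That degenerate case is not a technicality to be ``handled on its own'': it is a counterexample to the stated equivalence, and you should carry your own observation about the middle pairing to its conclusion. For $m=5$ and $i=6=\flo{3(m-1)}$ one has $h_6=3$, so $2h_6=6>m$, yet the three horizontal paths $(p,p)\to(p+4,p)$ for $p=1,2,3$ form a doubly vertex disjoint subdiagonal system (each path is its own initial segment, hence equals its flip), so $N(6,5)=1$; consistently, the weighted path matrix there is $\bigl(\begin{smallmatrix}20&5&1\\5&1&0\\1&0&0\end{smallmatrix}\bigr)$ with determinant $-1$, which is forced to be nonzero in any case because for $d-2i=0$ it is the Gram matrix of the nondegenerate intersection pairing on $A_{d/2}$ --- exactly the point you raise. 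The same failure occurs at the middle degree $i=3(m-1)/2$ for every $m\equiv 1\pmod 4$, where the coexistence interval $[2b,\,3m-3-2b]$ collapses to a single point and $2h_i=m+1>m$. The paper's own necessity argument breaks at exactly the same place (the closed parallelogram then contains exactly, not fewer than, the number of lattice points the $h_i$ paths require), so the correct statement must exclude that one degree; away from it your argument is complete and essentially identical to the paper's.
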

\begin{proof}
	Note that $N(i,m)\neq 0$ if and only if there exists a vertex disjoint NE lattice path contained completely inside the closed parallelogram bounded by the diagonal lines $y=x$ and $y=x-m+1$ and the horizontal lines $y=\flo{i+2-m}_\star$ and $y=m-1-\flo{i+2-m}_\star$.  Assuming first that $2\cdot h_i\leq m$, then we can construct a vertex disjoint NE lattice path system from each of the $h_i$ vertices in $\mathcal{A}^m_i$ to $\mathcal{B}^m_i$ using the $\flo{m}$ disjoint tubes formed by adjacent diagonal lines $y=x-2p$ and $y=x-2p-1$ for $0\leq p\leq \flo{m}$.  Conversely, if $2\cdot h_i>m$, then by counting the number of lattice points contained in the closed parallelogram, we see that there are not enough lattice points to contain $h_i$ vertex disjoint NE lattice paths from $\mathcal{A}^m_i$ to $\mathcal{B}^m_i$, and hence $N(i,m)$ must equal zero.
\end{proof}

Finally we are in a position to prove \Cref{introthm:HRPn2} from the Introduction.
\begin{proposition}
	\label{prop:ThmB}
	\begin{enumerate}
		\item If $m$ is even then $A(m,2)$ satisfies SLP.  If $m$ is odd then $A(m,n)$ satisfies SLP$_{m-1}$.
		\item The algebra $A(m,2)$ satisfies HLP for all $m$.
		\item If $m$ is even then $A(m,2)$ satisfies the complex HRR.  If $m$ is odd then $A(m,2)$ satisfies complex HRR$_{m-1}$.
	\end{enumerate}
\end{proposition}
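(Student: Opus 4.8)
The plan is to fix the linear form $\ell=e_1$ throughout (so $\mathbf c=(1,0)$) and to translate every Lefschetz statement about $A(m,2)$ into the lattice-path combinatorics of \Cref{lem:mDVD} and \Cref{cor:Lefschetz}. Combining Watanabe's identity \Cref{lem:Hessian} with \Cref{lem:PHessWPM} and the socle degree $d=3(m-1)$ of \Cref{prop:HilbertFunction}, the $i$-th Lefschetz pairing matrix is a positive scalar multiple of the weighted path matrix,
\[
M^{e_1}_i(\mathcal E_i)=\frac{1}{\big((3m-3-2i)!\big)^2}\,W(G',\omega'\equiv 1,\mathcal A^m_i,\mathcal B^m_i).
\]
Thus $\det M^{e_1}_i$ is a positive multiple of $\det W$, so by \Cref{lem:mDVD} we have $\sgn\!\big(\det M^{e_1}_i\big)=(-1)^{\flo{h_i}}$ when the number $N(i,m)$ of doubly vertex disjoint path systems is positive and $\det M^{e_1}_i=0$ otherwise, while \Cref{cor:Lefschetz} says $N(i,m)>0$ iff $2h_i\le m$. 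This single dictionary drives all three parts.

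For (1) I apply \Cref{lem:HRP}(1): $(A(m,2),e_1)$ has SL$_r$ iff $2h_i\le m$ for all $0\le i\le r$. Reading $h_i$ off \Cref{introthm:Hilb}, when $m$ is even the maximal value of the Hilbert function is $m/2$, so $2h_i\le m$ in every degree and we obtain SLP. When $m$ is odd the values grow as $\flo{i+2}$ and first reach $(m+1)/2$ at $i=m-1$; hence $2h_i\le m$ holds precisely for $0\le i\le m-2$, the Lefschetz map in degree $m-1$ already being singular, and we obtain the strong Lefschetz property up to that degree.

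For (2), when $m$ is even the Hilbert function is unimodal, so SLP and HLP coincide by \Cref{rem:HLP}. When $m$ is odd I verify $\operatorname{rk}M^{e_1}_i=\min\{h_i,\dots,h_{d-i}\}$ degree by degree. At every $i\ne m-1$ in the Lefschetz range one has $2h_i\le m$, so SL gives $\operatorname{rk}M^{e_1}_i=h_i$, and a short check on the palindrome of \Cref{introthm:Hilb} shows $h_i=\min\{h_i,\dots,h_{d-i}\}$ there; hence HLP. The single remaining degree is $i=m-1$, where $h_{m-1}=(m+1)/2$ but the minimum over the range equals $(m-1)/2$. Since $\det M^{e_1}_{m-1}=0$ forces $\operatorname{rk}M^{e_1}_{m-1}\le (m-1)/2$, I obtain equality by exhibiting a nonvanishing $(m-1)/2\times(m-1)/2$ minor: such a minor is itself a weighted path matrix for a sub-collection of $(m-1)/2$ source and target vertices, and because $2\cdot\tfrac{m-1}{2}=m-1\le m$ the tube routing of \Cref{cor:Lefschetz} produces a positive doubly-vertex-disjoint count, so by \Cref{lem:mDVD} (applied verbatim to the sub-collection) the minor is nonzero. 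Therefore $\operatorname{rk}M^{e_1}_{m-1}=(m-1)/2$ equals the required minimum, and HLP holds in all degrees.

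For (3) note first that $\dim P_{i,e_1}=h_i-h_{i-1}\in\{0,1\}$ by \Cref{lem:primdecomp} and \Cref{introthm:Hilb}, so each primitive Lefschetz form is at most one-dimensional and it suffices to track the signs of consecutive determinants. In the range $i\le m-1$ one has $h_i=\flo{i+2}$, so $P_{i,e_1}\ne 0$ exactly at the even degrees, and there $\sgn(\det M^{e_1}_i)/\sgn(\det M^{e_1}_{i-1})=(-1)^{\flo{h_i}-\flo{h_{i-1}}}$ equals the required $(\sqrt{-1})^{\,i}=(-1)^{i/2}$ by a one-line parity check. For even $m$ these even degrees $0,2,\dots,m-2$ are the only primitive degrees, the sequence plateauing at $m/2$ for $i\ge m-1$ (so $P_{i,e_1}=0$ and the complex condition is vacuous beyond), and SL holds throughout; hence the complex HRR in all degrees. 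For odd $m$ the same computation yields the complex HRR up to degree $m-2$, matching the SL range. The main obstacle I expect is the critical degree $i=m-1$ of part (2): unlike mere (non)vanishing of a determinant, HLP there requires the \emph{exact} rank, so the crux is the explicit nonvanishing maximal minor together with the verification that \Cref{lem:mDVD} and the construction of \Cref{cor:Lefschetz} apply to an arbitrary sub-collection of vertices. A secondary point is that \Cref{lem:HRP}(3) should be consulted only at degrees with $P_{i,e_1}\ne 0$, the plateau degrees contributing no condition.
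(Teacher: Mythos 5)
Your overall strategy is the one the paper uses: identify $M^{e_1}_i$ with a positive multiple of the weighted path matrix via \Cref{lem:Hessian} and \Cref{lem:PHessWPM}, then read off nonvanishing and sign of the determinant from \Cref{lem:mDVD} and \Cref{cor:Lefschetz}. But there is a real problem in the odd case that you half-see and do not resolve. Your own computation in part (1) shows that $2h_i\le m$ fails at $i=m-1$, since $h_{m-1}=\flo{m+1}=(m+1)/2$ for odd $m$; hence $\det M^{e_1}_{m-1}=0$ and the degree-$(m-1)$ Lefschetz map is singular. (This is forced independently of the path combinatorics: $h_{m-1}=(m+1)/2>(m-1)/2=h_{m}$, so $\times e_1\colon A_{m-1}\to A_m$ already has a kernel; the paper's own \Cref{ex:m7i56} exhibits exactly this at $m=5$, $i=4=m-1$.) Consequently what your argument establishes is SL$_{m-2}$ and complex HRR$_{m-2}$, not the SL$_{m-1}$ and complex HRR$_{m-1}$ asserted in parts (1) and (3). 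You cannot close that gap, because the stronger statements are false as written (for $m=3$ one checks directly that $\det\Hess_2(F_3,\mathcal{E}_2)|_{(1,0)}=0$, while SLP$_2$ would require it nonzero). The paper's proof of the odd case rests on the incorrect identity $h_{m-1}=\flo{m}$, so your analysis in fact exposes an off-by-one error in both the statement and the published argument; you should say so explicitly rather than presenting your (correct) weaker conclusion as a proof of the statement as given.

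The second gap is in your HLP argument for odd $m$: you assert that $i=m-1$ is the only degree in the Lefschetz range with $2h_i>m$. By \Cref{introthm:Hilb} the middle block alternates $(m+1)/2,(m-1)/2,\dots$, so \emph{every} even degree $i$ with $m-1\le i<\flo{3(m-1)}$ has $h_i=(m+1)/2$ and a singular Lefschetz map (for $m=7$ these are $i=6$ and $i=8$). Your maximal-minor argument would have to be run at each such degree, and at each one you must actually prove that \Cref{lem:mDVD} and the tube construction of \Cref{cor:Lefschetz} remain valid for the chosen sub-collections of sources and sinks --- plausible, but not something the paper provides and not a one-line remark. The paper's route avoids all of this: once SL$_{m-2}$ is known and $h_{m-2}=(m-1)/2=\min\{h_{m-2},\dots,h_{d-(m-2)}\}$, \Cref{rem:HLP} gives HLP in every higher degree at once, because $\times e_1^{d-2(m-2)}\colon A_{m-2}\to A_{d-m+2}$ factors through $\times e_1^{d-2i}\colon A_i\to A_{d-i}$ and so bounds its rank below by $(m-1)/2$, which equals the required minimum. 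I recommend you adopt that argument for part (2); your minor computation is salvageable but strictly more work and currently incomplete.
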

\begin{proof}
	First assume that $m$ is even.  Then it follows from \Cref{prop:HilbertFunction} that $2\cdot h_i\leq m$ for all $0\leq i\leq \flo{3(m-1)}$.  It follows from \Cref{lem:PHessWPM}, \Cref{lem:mDVD}, and \Cref{cor:Lefschetz} that for each $0\leq i\leq \flo{3(m-1)}$ the $i^{th}$ Hessian matrix $\Hess_i(F_m,\mathcal{E}_i)$ is nonsingular, and hence from \Cref{lem:Hessian} and \Cref{lem:HRP} it follows that $A(m,2)$ must have SLP.
	
	If $m$ is odd, it follows from \Cref{prop:HilbertFunction} that $h_0\leq h_1\leq \cdots\leq h_{m-1}=\flo{m}$ and $h_{m-1}=\min\{h_{m-1},\ldots,h_{2(m-1)}\}$.  Therefore it again follows from \Cref{lem:PHessWPM}, \Cref{lem:mDVD}, and \Cref{cor:Lefschetz} that for each $0\leq i\leq \flo{3(m-1)}$ the $i^{th}$ Hessian matrix $\Hess_i(F_m,\mathcal{E}_i)$ is nonsingular, and hence $A(m,2)$ satisfies SL$_{m-1}$.  It follows from \Cref{rem:HLP} that $A(m,2)$ satisfies HLP for all $m$.
	
	That $A(m,2)$ satisfies complex HRR if $m$ is even, or complex HRR$_{m-1}$ if $m$ is odd follows directly from \Cref{lem:mDVD} and \Cref{lem:HRP}. 
\end{proof}
\begin{remark}
	\label{rem:NOHRR}
	As pointed out in \Cref{rem:preclude}, $A(m,2)$ satisfying the complex HRR$_r$ precludes it satisfying HRR$_r$ for any $r\geq 2$.  In particular it follows from \Cref{cor:Lefschetz} that the ring $A(m,2)$ cannot satisfy HRP for any $m$ satisfying $r=\flo{3(m-1)}\geq 2$, or equivalently for any $m\geq 3$.  On the other hand, for $m=2$, $A(2,n)$ does satisfy HRR (and complex HRR!), and in fact it is the cohomology ring of a smooth complex projective algebraic variety, namely the complex projective space $\C\P^3$.  In contrast, it follows from \Cref{prop:ThmB} that $A(m,2)$ cannot be the cohomology ring of any smooth complex projective algebraic variety for any $m>2$!    
\end{remark}

\begin{example}
	\label{ex:m7i56}
Take $m=5$.  Then 
$$A(5,2)=\frac{\Q[e_1,e_2]}{(f_5(e_1,e_2),e_2^5)}=\frac{\Q[e_1,e_2]}{\Ann(F_5(E_1,E_2))}$$
where 
$$f_5(e_1,e_2)=e_1^5-5e_1^3e_2+5e_1e_2^2$$
and 
$$F_5(E_1,E_2)=\frac{E_1^4E_2^4}{4!4!}+5\frac{E_1^6E_2^3}{6!3!}+20\frac{E_1^8E_2^2}{8!2!}+75\frac{E_1^{10}E_2}{10!}+275\frac{E_1^{12}}{12!}.$$
Take $i=3$.  Then the ordered monomial basis is 
$$\mathcal{E}_3=\left\{b^3_1=e_1^3, b^3_2=e_1e_2\right\}$$
and the $3^{rd}$ Hessian evaluated at $\mathbf{c}=(1,0)$ is 
$$\Hess_3(F_5,\mathcal{E}_3)|_{\mathbf{c}}=\left(\begin{array}{cc} e_1^6\circ F_5|_{\mathbf{c}} & e_1^4e_2\circ F_5|_{\mathbf{c}}\\ e_1^4e_2\circ F_5|_{\mathbf{c}} & e_1^2e_2^2\circ F_5|_{\mathbf{c}}\\ \end{array}\right)=\frac{1}{6!}\cdot \left(\begin{array}{cc}275 & 75\\
75 & 20\\\end{array}\right)\overset{\det}{\mapsto} -\frac{125}{6!}.$$
	\begin{figure}[h!!]
	\includegraphics[width=10cm]{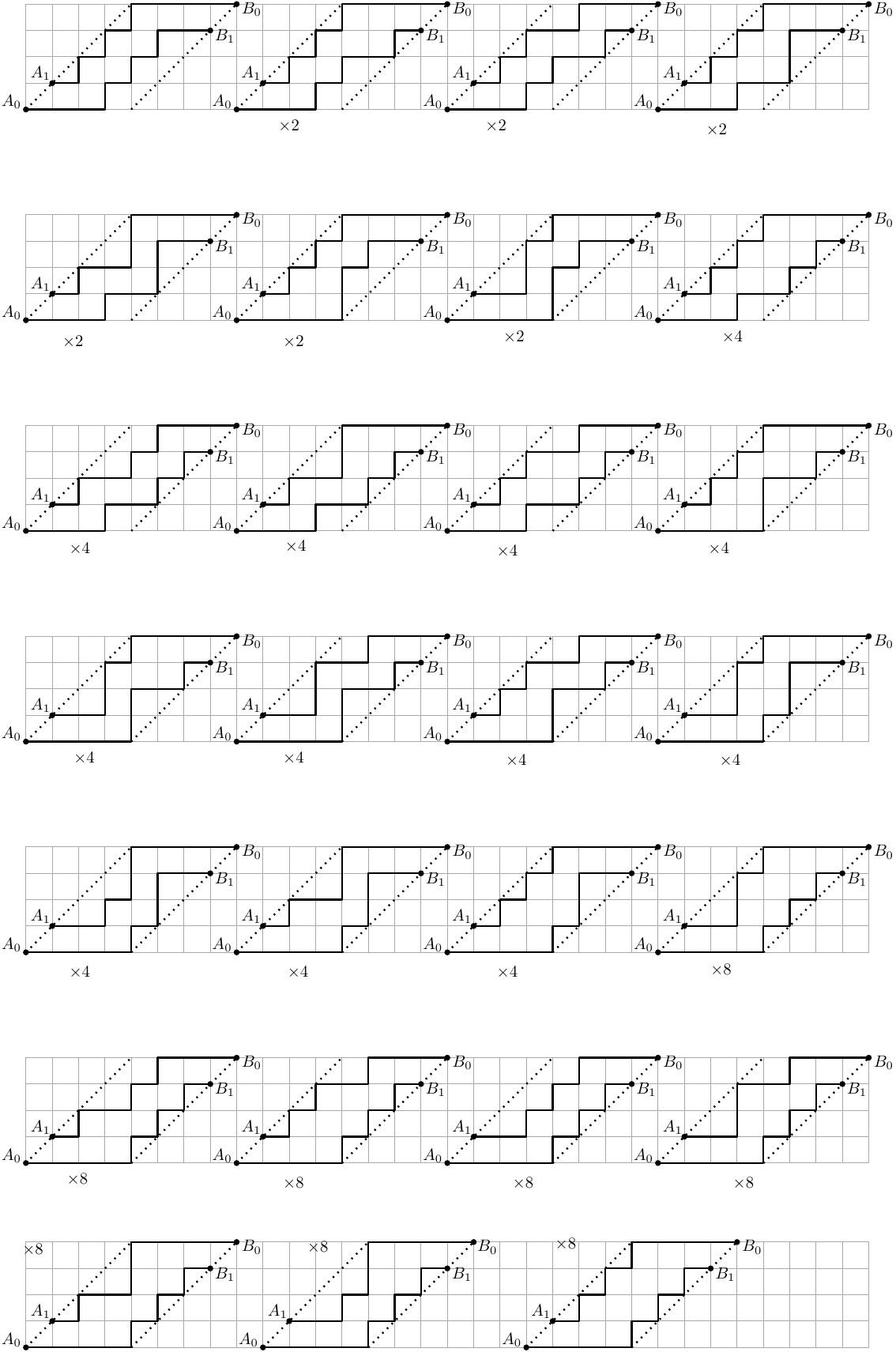}
	\caption{Doubly vertex disjoint path systems from $\mathcal{A}^5_3$ to $\mathcal{B}^5_3$, counted with multiplicities (125 total).}
	\label{fig:paths}
	\end{figure}
The $125$ in the determinant counts the number of doubly vertex disjoint subdiagonal NE lattice path systems from $\mathcal{A}^5_3=\{A_0=(0,0),A_1=(1,1)\}$ to $\mathcal{B}^5_3=\{B_0=(8,4),B_1=(7,3)\}$ shown in \Cref{fig:paths} (actually shown are those upper path systems that lie completely inside the parallelogram; every lower path system may be transformed into a unique upper path system by flipping the lower primitive segments, hence the multiplicities attached--a factor of $2$ for every primitive segment).

On the other hand, if we take $i=4$, then there are no doubly vertex disjoint subdiagonal NE lattice path systems from 
$\mathcal{A}^5_4=\{A_p=(p,p) \ | \ 0\leq p\leq 2\}$ and $\mathcal{B}^5_4=\{B_q=(8-q,4-q) \ | \ 0\leq q\leq 2\}$ since $h_4=3$ and $2\cdot h_4>m=5$, and hence we deduce that the determinant of the $4^{th}$ Hessian evaluated at $\mathbf{c}=(1,0)$ must be zero:
$$\det\left(\Hess_4(F_5,\mathcal{E}_4)|_{\mathbf{c}}\right)=0.$$
Shown in \Cref{fig:cancel} are two vertex disjoint path systems that cancel out in the determinant, as in \Cref{lem:mDVD}.
\begin{figure}[h!]
	\includegraphics[width=10cm]{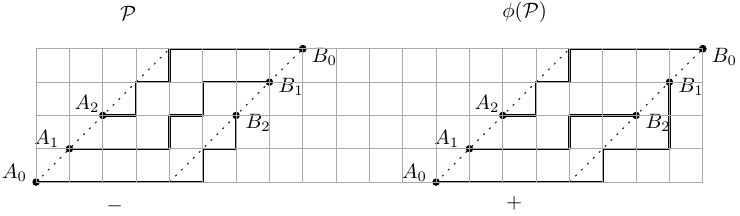}
	\caption{Two path systems that cancel out in the determinant $\det\left(\Hess_4\left(F_5\right)\right)$.}
	\label{fig:cancel}
\end{figure}
Of course we should expect this by looking at the Hilbert function
$$H(A(5,2))=(1,1,2,2,3,2,3,2,3,2,2,1,1);$$
indeed, since $\dim_{\Q}(A_4)=3$ and $\dim_{\Q}(A_5)=2$, it follows that the multiplication map $\times a^{4}\colon A_4\rightarrow A_8$ must have nonzero kernel. 
\end{example}



\bibliographystyle{alpha}

\appendix
\section{Connections to Invariant Theory and Partitions}
\label{sec:Appendix}
\subsection{Invariant Theory}
	Let $\F=\Q(\zeta)$ denote the field of rational numbers adjoined with $\zeta$ a primitive $m^{th}$ root of unity.  Let $G(m,1,n)$ be the group of $n\times n$ matrices over $\F$ consisting of permutation matrices whose nonzero entries are $m^{th}$ roots of unity.  This is equal to the semi-direct product  
$$G(m,1,n)=\left\{\left.\left(\begin{array}{ccc} \lambda_1 & \cdots & 0\\ \vdots & \ddots & \vdots\\ 0 & \cdots & \lambda_n\\ \end{array}\right) \right| \lambda_i^m=1\right\}\rtimes \mathfrak{S}_n$$
where $\mathfrak{S}_n$ is the subgroup of permutation matrices whose nonzero entries are $1$'s.  The group $G(m,1,n)$ is a pseudo-reflection group generated by permutation matrices $\sigma\in\mathfrak{S}_n$ together with the diagonal matrix $t=\left(\begin{array}{ccc} \zeta & \cdots & 0\\ \vdots & \ddots & \vdots\\ 0 & \cdots & 1\\ \end{array}\right)$.  Then $G(m,1,n)$ acts on the polynomial ring $S=\F[x_1,\ldots,x_n]$ in the usual way with $\sigma(x_i)=x_{\sigma(i)}$ and $t(x_i)=\begin{cases} \zeta\cdot x_i & \text{if} \ i= 1\\
	x_i & \text{if} \ i\neq 1\\ \end{cases}$.

The elementary symmetric functions $e_i=e_i(x_1,\ldots,x_n)=\sum_{1\leq k_1<\cdots<k_i\leq n}x_{i_1}\cdots x_{i_n}\in R_\F$ are a fundamental set of invariants of the symmetric group $\mathfrak{S}_n$ acting on $S$ (or even $R=\Q[x_1,\ldots,x_n]$) as above, meaning that the subalgebra they generate $S^{\mathfrak{S}_n}=\F[e_1,\ldots,e_n]\subset S$ is a polynomial algebra that contains every other symmetric function, e.g. \cite[Theorems 1.1.1 and 1.1.2]{Smith}.  
Define the $i^{th}$ $m^{th}$ power elementary symmetric function $e_i(m)$ to be the $i^{th}$ elementary symmetric function evaluated at the $m^{th}$ powers of the variables, i.e.
$e_i(m)=e_i(x_1^m,\ldots,x_n^m)$.  Note that $e_i(m)$ is invariant under the larger group action of $G(m,1,n)$ for all $1\leq i\leq n$.

Since $e_1,\ldots,e_n$ are algebraically independent over $\F$, and $x_1^m,\ldots,x_n^m$ are algebraically independent and $\F$, it follows that $e_1(m),\ldots,e_n(m)$ are also algebraically independent over $\F$.  Since $e_1(m)\otimes 1,\ldots,e_n(m)\otimes 1\in S$ are algebraically independent over $\F$ and $\prod_{i=1}^n\deg(e_i(m))=|G(m,1,n)|=m^n\cdot n!$, it follows from \cite[Proposition 7.4.2]{Smith} that the invariant polynomials $e_1(m),\ldots,e_n(m)$ are a fundamental set of invariants for the pseudo reflection group $G(m,1,n)$, meaning that the subalgebra generated by them is a polynomial algebra that contains all other $G(m,1,n)$-invariant polynomials; in symbols we write 
$$\left(S\right)^{G(m,1,n)} =\F[e_1(m),\ldots,e_n(m)].$$  
It follows from a theorem of E. Noether, e.g. \cite[Theorem 2.3.1]{Smith} that the inclusion $S^{G(m,1,n)}\subset S$ is a finite extension of rings meaning that $S$ is finitely generated as a module over $S^{G(m,1,n)}$, and likewise for $S^{\mathfrak{S}_n}\subset S$.  It follows that the inclusion $S^{G(m,1,n)}\subseteq S^{\mathfrak{S}_n}$ is also a finite extension.  Indeed, if $f_1,\ldots,f_N\in S$ are a finite generating set for $S$ as a module over $S^{G(m,1,n)}$ then their symmetrizations under $\mathfrak{S}_n$, $f_1^\sharp,\ldots,f_n^\sharp$ where $f_i^\sharp = \frac{1}{n!}\sum_{\sigma\in\mathfrak{S}_n}\sigma(f_i)$, must be a finite generating set for $S^{\mathfrak{S}_n}$ as a module over $S^{G(m,1,n)}$.  This implies that the quotient algebra 
$$\frac{S^{\mathfrak{S}_n}}{S^{G(m,1,n)}_+\cdot S^{\mathfrak{S}_n}}=\frac{\F[e_1,\ldots,e_n]}{(e_1(m),\ldots,e_n(m))}=A(m,n)\otimes_\Q\F$$
is finite dimensional over $\F$, and hence is an Artinian complete intersection over $\F$.  Finally, since $\F\supseteq \Q$ is a finite field extension, it follows that $A(m,n)$ is finite dimensional over $\Q$, and hence must also be an Artinian complete intersection over $\Q$.

One can further show that if we define the coinvariant algebra of $G(m,1,n)$ to be the quotient algebra 
$$S_{G(m,1,n)}=\frac{S}{S^{G(m,1,n)}_+\cdot S}$$
then, and endow it with the induced action by the subgroup $\mathfrak{S}_n$, then the $\mathfrak{S}_n$-invariant subring of the $G(m,1,n)$-coinvariant ring satisfies
$$\left(S_{G(m,1,n)}\right)^{\mathfrak{S}_n}=\frac{S^{\mathfrak{S}_n}}{S^{G(m,1,n)}_+\cdot S^{\mathfrak{S}_n}}=A(m,n)\otimes_\Q\F.$$
In particular, we recover a special case of a theorem of S. Goto \cite{Goto}:  if $A$ is a complete intersection algebra and $G$ is a finite pseudo-reflection group acting linearly on $A$, then the invariant ring $A^G$ must also be a complete intersection.

For $m=2$, the group $G(2,1,n)$ is a real reflection group, and is in fact the Weyl group of type $B_n$.  It follows from results of Borel and Bernstein-Gelfand-Gelfand \cite[Proposition 1.3 and Theorem 5.5]{BGG} that 
$$A(2,n)\cong H^{2\bullet}(G/P,\Q)$$
where $G/P$ is a smooth complex projective algebraic variety (a partial flag variety of type $B_n$).  It follows from Hodge theory on K\"ahler manifolds, e.g. \cite{Huybrechts}, that $A(2,n)$ satisfies SLP and HRR for all $n$.  This is essentially Stanley's proof of Almkvist's conjecture for $m=2$ \cite[Theorem 3.1]{Stanley}.           
\subsection{Partitions}
Recall that an integer partition $\lambda$ is a weakly decreasing sequence of non-negative integers $\lambda=(\lambda_1,\ldots,\lambda_e)$; the individual integers $\lambda_k$ are called the parts of $\lambda$, the length of the partition $\ell(\lambda)$ is the largest index $j$ for which $\lambda_j\neq 0$, the size of the partition $|\lambda|$ is equal to the sum of its parts $|\lambda|=\lambda_1+\cdots+\lambda_e$, and the empty partition is the unique partition of size $0$.  Let $\mathsf{P}$ denote the set of all integer partitions and let $\mathsf{P}_k$ denote the subset of partitions of size $k$.

Consider the two parameter family $\mathsf{P}(m,n)\subset\mathsf{P}$ consisting of integer partitions with part sizes at most $n$ and the number of repeated part sizes at most $m-1$, i.e.
$$\mathsf{P}(m,n)=\left\{\lambda\in\mathsf{P} \ | \ \lambda_k\leq n, \ \lambda_{k+m-1}<\lambda_k, \ \forall k\right\}.$$  
For example, $\mathsf{P}(3,2)$ is the set of partitions with part sizes at most $2$ and number of repeated parts at most $3-1=2$, that is:
$$\mathsf{P}(3,2)=\left\{\emptyset, (1), (2), (1,1), (2,1), (2,2), (2,1,1), (2,2,1), (2,2,1,1)\right\}.$$

The generating function of $\mathsf{P}(m,n)$ is the polynomial 
$P(m,n;t)=\sum_{k\geq 0} p_k\cdot t^k$ where $p_k=\#\mathsf{P}_k$.  Note that every partition $\lambda\in \mathsf{P}(m,n)_k$ is uniquely determined by a sequence of  integers $(j_1,\ldots,j_n)$ where $0\leq j_i\leq m-1$ and $\lambda$ has $j_i$ parts of size $i$ for $1\leq i\leq n$, and $j_1+2j_2+\cdots+nj_n=k$.  On the other hand, every such sequence of integers contributes exactly one $t^k$ term in the expansion of the polynomial
$$H(m,n;t)=\prod_{i=1}^n\left(1+t^i+t^{2i}+\cdots+t^{(m-1)i}\right).$$
It follows that the Hilbert function of the algebra $A(m,n)$ is equal to the generating function of the set of partitions $\mathsf{P}(m,n)$, i.e. 
$H(m,n;t)=P(m,n;t)$ for all $m,n$.

In fact, results of B. Totaro \cite{Totaro} show that this connection runs much deeper.  He has shown that the algebra $A(m,n)$ has a basis of (eqivalence classes of) Hall-Littlewood symmetric polynomials, specialized at the $m^{th}$ root of unity and indexed by the partitions $\mathsf{P}(m,n)$, i.e.
$$\left\{\left[Q_\lambda(x_1,\ldots,x_n, \zeta)\right]\in A(m,n) \ | \ \lambda\in \mathsf{P}(m,n)\right\}.$$
It remains an intriguing and challenging open problem to describe their structure constants and to find their associated ``complex Schubert calculus''.  As a first step in this direction, Totaro computes the following ``degree formula'' for the algebra $A(m,n)$ \cite[Theorem 0.1]{Totaro}:
$$e_1^{\binom{n+1}{2}(m-1)}\equiv m^{\binom{n}{2}}\frac{\left(\binom{n+1}{2}(m-1)\right)! 1!2!\cdots(n-1)!}{(m-1)!(2m-1)!\cdots(nm-1)!}\cdot \left(e_1\cdots e_n\right)^{m-1}.$$ 
Of course, for $n=2$, this constant in front of the socle generator $(e_1\cdots e_n)^{m-1}$ agrees with our formula for the $0^{th}$ Hessian of $F_m$ from \Cref{lem:Hessian1} (scaled by $d!$):
$$(3m-3)!\cdot \Hess_0(F_m,\mathcal{B}_0)=\left(\frac{m}{3m-2}\binom{3m-2}{m-1}\right).$$

\end{document}